\newtheorem{thm}{Theorem}[section]
\newtheorem{lem}[thm]{Lemma}
\newtheorem{prop}[thm]{Proposition}
\newtheorem{cor}[thm]{Corollary}
\theoremstyle{definition}
\newtheorem{rem}[thm]{Remark}
\newtheorem{exs}[thm]{Examples}
\def\algspt{algebra\xspace}
\def\algspta{algebras\xspace}
\def\Z{\mathbb{Z}}
\def\Zp{\mathbb{Z}_{(p)}}
\def\F{\mathbb{F}}
\def\N{\mathbb{N}}
\def\bA{\bar{\mathcal{A}}}
\def\ev0{\mathit{Ev}_0}
\def\tmf{\mathsf{tmf}}
\def\leq{\leqslant}
\def\geq{\geqslant}
\def\ra{\rightarrow}
\def\THH{\mathsf{THH}}
\def\HH{\mathsf{HH}}
\def\ie{\emph{i.e.}}
\def\Tor{\mathsf{Tor}}
\begin{document}
\title{
Towards an understanding of ramified extensions of structured ring spectra}
\author{Bj{\o}rn Ian Dundas}
\address{Department of Mathematics, University of Bergen, Postboks
  7800, 5020 Bergen, Norway}
\email{dundas@math.uib.no}
\urladdr{http://folk.uib.no/nmabd/}
\author{Ayelet Lindenstrauss}
\address{Mathematics Department, Indiana University, 236 Rawles Hall,
  Bloomington, IN 47405, USA}
\email{alindens@indiana.edu}
\urladdr{http://pages.iu.edu/~alindens/}
\author{Birgit Richter}
\address{Fachbereich Mathematik der Universit\"at Hamburg,
Bundesstra{\ss}e 55, 20146 Hamburg, Germany}
\email{richter@math.uni-hamburg.de}
\urladdr{http://www.math.uni-hamburg.de/home/richter/}

\date{\today}
\keywords{Higher topological Hochschild homology, tame ramification,
  Adams summand, topological K-theory, complexification map}
\subjclass[2010]{55P43, 55N15, 11S15}

\begin{abstract}
We propose topological Hochschild homology as a tool for measuring
ramification of maps of structured ring spectra. 
We determine second order topological Hochschild homology of the
$p$-local integers.
For the tamely ramified
extension of the map from the connective Adams summand to $p$-local
complex topological K-theory we determine
the relative topological Hochschild homology and show that it detects
the tame ramification of this extension. 
We also determine relative topological Hochschild homology for the
complexification map from connective real to complex topological
K-theory and for some quotient maps with commutative quotients.
\end{abstract}
\maketitle
\section{Introduction}
For a $G$-Galois extension of number fields $K \subset L$ the associated
extension of rings of integers $\mathcal{O}_K \ra \mathcal{O}_L$ will
not be unramified in general. Greither shows in \cite[Chapter 0,
Theorem 4.1]{greither} that the ramification of such an extension can
be detected with the help of the map 
\begin{equation} \label{eq:ramified}
h \colon \mathcal{O}_L \otimes_{\mathcal{O}_K} \mathcal{O}_L \ra 
\prod_G\mathcal{O}_L.
\end{equation}
Here, $h$ is defined as $h(b_1
\otimes b_2) = (b_1g(b_2))_{g \in G}$ for $b_1, b_2 \in
\mathcal{O}_L$. The extension is unramified if $h$ is an
isomorphism. For more general extensions of commutative rings this
still gives an adequate notion of ramification. The 
Hochschild homology of $\mathcal{O}_L$ over $\mathcal{O}_K$ is an
invariant that behaves 
differently depending on whether the extension is tamely
or wildly ramified. For instance 
$\HH^{\Z}(\Z[i])$ is a square-zero extension of $\Z[i]$ with additive
two-torsion in positive odd degrees. 

In the following we consider cohomology theories with a 
multiplicative structure that can be represented by a commutative
monoid object in one of the symmetric monoidal categories of spectra,
for instance the one presented in \cite{EKMM}. The representing
objects are called commutative ring spectra. Examples of such
cohomology theories are singular cohomology with coefficients in a
commutative ring, topological (real or complex) K-theory, and
several cobordism theories. There is an analogue of Hochschild homology
in the context of ring spectra, topological Hochschild homology. It
was defined by B\"okstedt \cite{Boe} and a 
published account can for instance be found in \cite[Chapter IX]{EKMM}. 

Let $A$ be a commutative ring spectrum and let $B$ be a commutative
$A$-\algspt with an action of a finite group $G$ via maps of
commutative
$A$-\algspta. Then the extension $A \ra B$ is called
unramified \cite[(4.1.2)]{R}, if the map
\begin{equation}\label{eq:unramified}
h \colon B \wedge_A B \ra \prod_G B
\end{equation}
is an equivalence. 
Here, $h$ is the analogue of \eqref{eq:ramified} in the context of
spectra.

Rognes shows \cite[9.2.6,
proof of 9.1.2]{R} that the
condition for $B$ to be unramified over $A$ ensures that the map from
$B$ to relative topological Hochschild homology
of $B$ over $A$, $\THH^A(B)$, is a weak equivalence. Thus the failure
of the map
$$ B \ra \THH^A(B)$$
to be a weak equivalence is a measure of the ramification of the
extension $A \ra B$. It also makes sense to study $\THH^A(B)$ in more
general situations, for 
instance in the absence of a group action.

Algebraic K-theory of an ordinary commutative ring $R$, $K(R)$,
contains a lot of  arithmetic information about $R$, such as the Picard group
of $R$, its Brauer group and its units. Trace methods have been useful
for studying $K(R)$: There are trace maps
$$ \xymatrix{
{K(R)} \ar[r]^{trc} \ar[dr]_{tr} & {TC(R)} \ar[d]\\
                      & {\THH(R)}
}$$
that allow us to approximate $K(R)$ by invariants that are easier to
compute, by topological Hochschild homology, $\THH(R)$, and by topological
cyclic homology, $TC(R)$. Trace methods work also well for
connective commutative
ring spectra, \ie, commutative ring spectra whose homotopy groups are
concentrated in non-negative degrees.

Galois extensions of commutative
$S$-algebras in the sense of Rognes \cite[4.1.3]{R} are unramified. A
prominent example is given by the complexification map from real to
complex periodic K-theory, $c \colon KO \ra KU$. Here, complex
conjugation on complex vector bundles induces a $C_2$-action on $KU$ by
maps of commutative $KO$-algebra spectra. But a result of Akhil Mathew
\cite[Theorem 6.17]{mathew} tells us that finite Galois extensions of
a connective spectrum are purely algebraic. So taking the connective
cover of the complexification map

$$\xymatrix{
{ko} \ar[r]^(0.4)c \ar[d]_j & {ku} \ar[d]^j \\
{KO}  \ar[r]^(0.4)c & {KU}
}$$
does \emph{not} yield a $C_2$-Galois extension $ko \ra ku$ because
algebraically
$$ko_* = \Z[\eta, y, w]/2\eta, \eta^3, \eta y,
y^2-4w \ra \Z[u] \cong ku_*$$
is certainly not \'etale. (Here, the
degrees are $|\eta|=1$, $|y|=4$, $|w|=8$ and $|u|=2$.) 

For a commutative $A$-algebra $B$  we denote by
$\THH^{[n],A}(B)$ the higher order topological Hochschild homology of
$B$ as a commutative $A$-algebra, \ie,
$$ \THH^{[n],A}(B) = B \otimes \mathbb{S}^n$$
where $(-) \otimes \mathbb{S}^n$ denotes the tensor with the
$n$-sphere in the category of commutative $A$-algebras.
This can be viewed as the realization of the simplicial commutative
$A$-\algspt whose $q$-simplices are given by
$$ \bigsqcup_{x \in \mathbb{S}^n_q} B,$$
where the coproduct is the smash product over $A$.

Higher $\THH$ of the Eilenberg Mac Lane spectra of local number rings
also detects ramification \cite{dlr}, but after we take 
coefficients in the residue field we cannot see the difference anymore between
tame and wild ramification in higher $\THH$.  We offer some partial
results towards calculations of higher $\THH$ with unreduced
coefficients. We calculate second order $\THH$ of the $p$-local integers:
$$ \THH_*^{[2]}(\Zp) \cong \Zp[x_1, x_2, \ldots ]/p^nx_n, x_n^p = px_{n+1}.$$
See Theorem \ref{thm:thhzp}.

It is possible to determine $\THH_*^{[2]}(\Zp)$ additively using that
$H\Z_{(p)}$ can be constructed as a Thom spectrum of a double-loop map
and applying the methods of \cite{BCS,S}. Blumberg, Cohen and
Schlichtkrull identify $\THH(\Zp)$ with $H\Zp \wedge \Omega 
\mathbb{S}^3\langle 3\rangle_+$ \cite[Theorem 3.8]{BCS}. See
\cite[Corollary 1.1]{klang}  
for a calculation of $\THH_*^{[2]}(\Zp)$. However, this
views $H\Zp$ as an $E_2$-spectrum and not as a commutative
$S$-algebra, so with this method the multiplicative structure of
$\THH_*^{[2]}(\Zp)$ cannot be determined. The multiplicative structure
is  essential  
if one aims at a calculation of $\THH_*^{[n]}(\Zp)$ for larger $n$. 

We study the examples of the connective covers of the
Galois extensions \cite{R} $KO \ra KU$ and $L_p \ra KU_p$. In the
latter case, the connective cover behaves like an extension of the
corresponding rings of integers.  We
test ramification with relative (higher) topological Hochschild
homology and for $\ell \ra ku_{(p)}$ we see that it looks like tame
ramification (see 
Theorem \ref{thm:kulthh}): $\THH^\ell_*(ku_{(p)})$ is a square zero
extension of $\pi_*ku_{(p)}$ of bounded $u$-exponent. We also
determine relative $\THH$ of the 
complexification map $c \colon ko \ra ku$ (see Theorem \ref{thm:kukothh}). 

Working with structured ring spectra means working in a derived
setting, so quotient maps
can be thought of as extensions. We offer some calculations of
relative $\THH$ in situations
where we kill generators of homotopy groups. We consider a version of
$ku/(p, v_1)$ and
quotients of the form $R/x$ where $x$ is a regular element in $\pi_*(R)$ where
$R$ is a commutative ring spectrum such that $R/x$ is still commutative.

\textbf{Acknowledgement} The last named author thanks the Hausdorff
Research Institute for Mathematics in Bonn for its hospitality during
the Trimester Program \emph{Homotopy theory, manifolds, and field
theories}. She also thanks the Department of Mathematics of the Indiana
University Bloomington for an invitation in the spring of 2016.

\section{Second order $\THH$ of the $p$-local integers}
This section consists of a proof of the following somewhat surprising
result.  In the context of the current paper, this calculation is a
starting point for comparing with future calculations for other rings
of integers.  See Remark~\ref{coincidence} for a discussion of the
fact that the answer agrees with topological Hochschild cohomology.
\begin{thm} \label{thm:thhzp}
For all primes $p$:
$$\THH_*^{[2]}(\Z_{(p)}) \cong \Z_{(p)}[x_1, x_2, \ldots]/p^nx_n, x_n^p - px_{n+1}$$
with $|x_n| = 2p^n$.
\end{thm}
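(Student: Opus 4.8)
The plan starts from the homotopy pushout $S^2\simeq\ast\cup_{S^1}\ast$ of spaces, in which both maps collapse the circle. Tensoring with $H\Zp$ in the category of commutative $S$-\algspta turns this into an equivalence
$$\THH^{[2]}(\Zp)\ \simeq\ H\Zp\wedge_{\THH(\Zp)}H\Zp$$
in which both copies of $H\Zp$ are $\THH(\Zp)$-modules via the augmentation $\varepsilon\colon\THH(\Zp)\lra H\Zp$. The homotopy groups are then computed by the multiplicative Künneth spectral sequence
$$E^2_{*,*}=\Tor^{\THH_*(\Zp)}_{*,*}(\Zp,\Zp)\ \Longrightarrow\ \THH^{[2]}_*(\Zp),$$
whose input is Bökstedt's calculation: $\THH_*(\Zp)$ is the square-zero extension of $\Zp$ by the graded module having a cyclic generator of order $p^{v_p(i)}$ in degree $2i-1$ for each $i$ with $p\mid i$; in particular there is a generator of order exactly $p^n$ in degree $2p^n-1$ for every $n\geq 1$.

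Because $\THH_*(\Zp)$ is all torsion in positive degrees, I would run the computation mod $p$ first. With $\THH(\Zp)/p:=\THH(\Zp)\wedge_{H\Zp}H\F_p$, base change along $H\Zp\to H\F_p$ identifies $\THH^{[2]}(\Zp)/p\simeq H\F_p\wedge_{\THH(\Zp)/p}H\F_p$, and the associated Künneth spectral sequence has the much simpler input $\pi_*(\THH(\Zp)/p)=\Lambda_{\F_p}(\lambda_1)\otimes\F_p[\mu_1]$ with $|\lambda_1|=2p-1$, $|\mu_1|=2p$ and first Bockstein $\beta_1\mu_1=\lambda_1$ (Bökstedt again). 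Thus
$$E^2=\Tor^{\Lambda_{\F_p}(\lambda_1)\otimes\F_p[\mu_1]}(\F_p,\F_p)\ \cong\ \Gamma_{\F_p}[\sigma\lambda_1]\otimes\Lambda_{\F_p}(\sigma\mu_1),\qquad |\sigma\lambda_1|=2p,\ |\sigma\mu_1|=2p+1.$$
Every class lies in total degree $\equiv 0$ or $1$ modulo $2p$, whereas a differential must lower total degree by one and homological degree by at least two; a short count then shows the spectral sequence degenerates and admits no multiplicative extensions beyond those already visible in the divided-power algebra. Writing $x_n:=\gamma_{p^{n-1}}(\sigma\lambda_1)$, so $|x_n|=2p^n$, and using $\gamma_{p^{n-1}}(y)^p=\binom{p^n}{p^{n-1},\ldots,p^{n-1}}\gamma_{p^n}(y)$ with $v_p$ of that multinomial coefficient equal to $1$ (Legendre's formula), this gives
$$\pi_*\big(\THH^{[2]}(\Zp)/p\big)\ \cong\ \F_p[x_1,x_2,\ldots]/(x_n^p)\ \otimes\ \Lambda_{\F_p}(\sigma\mu_1).$$

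It remains to promote this to the integral answer. Since $\THH^{[2]}(\Zp)$ is an $H\Zp$-module it splits as a wedge of suspensions of Eilenberg--MacLane spectra, so $\THH^{[2]}_*(\Zp)$ is recovered from $\pi_*(\THH^{[2]}(\Zp)/p)$ together with the Bockstein spectral sequence (equivalently, one re-runs the integral Künneth spectral sequence above). The key features are that, integrally, the relevant $\Tor$ carries the \emph{integral} divided-power algebra $\Gamma_{\Zp}[\sigma\lambda_1]$, in which the same multinomial computation now reads $\gamma_{p^{n-1}}(\sigma\lambda_1)^p=p\cdot(\text{unit})\cdot\gamma_{p^n}(\sigma\lambda_1)$ — after rescaling the generators this is precisely the Frobenius relation $x_n^p=p\,x_{n+1}$ — while the order-$p^n$ generator of $\THH_*(\Zp)$ in degree $2p^n-1$ contributes a $\Z/p^n$ in degree $2p^n$ through its suspension, accounting for $p^nx_n=0$; the spectral sequence (via the $\sigma\mu_1$-classes, which encode the Bocksteins) must then glue these two sources of classes in degree $2p^n$ into a single generator $x_n$ of order exactly $p^n$. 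I expect this last extension problem — establishing the precise pattern of higher Bocksteins uniformly in $n$, not merely in a bounded range of degrees — to be the main obstacle. An induction on $n$, propagating the structure of the $n$-th layer to the $(n+1)$-st by means of the circle action on $\THH$ or of the Dyer--Lashof operations carried by the $\F_p$-homology of the $E_\infty$-ring $\THH^{[2]}(\Zp)$, together with the known additive answer \cite{klang} as a consistency check, should finish the argument.
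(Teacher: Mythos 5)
Your setup is sound and matches the paper's starting point: the identification $\THH^{[2]}(\Zp)\simeq H\Zp\wedge_{\THH(\Zp)}H\Zp$, the Tor/K\"unneth spectral sequence over B\"okstedt's $\THH_*(\Zp)$, and the mod $p$ computation giving $\Gamma_{\F_p}(y)\otimes\Lambda_{\F_p}(z)$ (which is exactly the input the paper quotes from \cite{dlr}). But the proof stops precisely where the real work begins. The theorem's content is (a) the exact torsion order $p^{v_p(t)+1}$ in degree $2pt$ and (b) the multiplicative relations $p^nx_n=0$, $x_n^p=px_{n+1}$, and your argument for both is deferred to ``the Bockstein spectral sequence'' plus an induction ``by means of the circle action or Dyer--Lashof operations'' that ``should finish the argument.'' That uniform higher-Bockstein pattern is the main obstacle, as you say, and it is not established by anything you wrote. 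The paper needs two substantial arguments here: an induction (Lemma \ref{lem:bjorn}) exploiting that the boundary map $\delta$ in the cofiber sequence \eqref{eq:cofib} is a $\THH_*^{[2]}(\Zp)$-module map, which yields the lower bounds on the orders and the relation $\tilde y_{p^{n-1}}^{\,p}=p^{a_{p^n}-a_{p^{n-1}}}\tilde y_{p^n}$; and a chain-level computation (Lemma \ref{lem:ayelet}) in the skeletal filtration of $B(H\Zp,\THH(\Zp),H\Zp)$, mapping in integral chains of $K(\F_p,2p-1)^{\wedge t}$ and computing $d^0$ of an explicit alternating sum to show $\beta_{v_p(t)+1}(z\gamma_{t-1}(y))=\gamma_t(y)$, which gives the matching upper bound. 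Nothing of comparable force appears in your sketch.

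Moreover, the one concrete mechanism you offer for the relation $x_n^p=px_{n+1}$ is flawed: you assert that the integral $E^2$-term ``carries the integral divided-power algebra $\Gamma_{\Zp}[\sigma\lambda_1]$'' and read the relation off the multinomial identity $\gamma_{p^{n-1}}(y)^p=p\cdot(\mathrm{unit})\cdot\gamma_{p^n}(y)$. Integrally the base ring is the square-zero extension $\THH_*(\Zp)$ whose positive-degree part is torsion; the class $\sigma\lambda_1\in\Tor_1$ is itself $p$-torsion (already of order $p$ in degree $2p$), so there is no torsion-free $\Gamma_{\Zp}$ in the $E^2$-term, and the relation $x_n^p=px_{n+1}$ cannot be an identity visible there --- in the answer it relates classes of different orders across different filtrations, i.e.\ it is a multiplicative-extension statement about the abutment. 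The paper obtains it from the module structure of $\delta$ (property $\mathbf{P(n+1)(4)}$ in Lemma \ref{lem:bjorn}), not from the $E^2$-term. As it stands, your proposal reproduces the known mod $p$ answer and a correct guess for the integral one, but the torsion orders and the multiplicative structure --- the substance of Theorem \ref{thm:thhzp} --- remain unproved.
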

The entire section is devoted to proving this result
For all  primes $p$ the exact sequence
\begin{equation} \label{eq:cofib}
\xymatrix@1{
{\THH_*^{[2]}(\Z_{(p)})} \ar[r]^p & {\THH_*^{[2]}(\Z_{(p)})} \ar[r]^-r
& {\THH_*^{[2]}(\Z_{(p)}, \F_p)} \ar[r]^\delta &
{\Sigma\THH_*^{[2]}(\Z_{(p)})}
}
\end{equation}
is a sequence of $\THH_*^{[2]}(\Z_{(p)})$-modules; in particular,
$\delta$ is a module map.  Furthermore, from \cite{dlr} we have that

\begin{equation} \label{eq:thhzpfp} \THH_*^{[2]}(\Z_{(p)}, \F_p) \cong
  \Gamma_{\F_p}(y) \otimes \Lambda_{\F_p}(z),
\end{equation}
where $|y|=2p$ and $|z|=2p+1$.  We denote the generator
$\gamma_{p^i}(y)$ in the divided power algebra
$\Gamma_{\F_p}(y)$ in degree $2p^{i+1}$  by $y_{p^i}$ and if
$t=t_0+t_1p+\dots+t_np^n$ is the $p$-adic expansion of $t$, then we set
$y_t=y_1^{t_0}y_p^{t_1}
\dots y_{p^{n}}^{t_n}$ with $y_t^p=0$.

By the $\Tor$ spectral sequence,
$$ \Tor_{*,*}^{\THH_*(\Zp)}(\Zp, \Zp) \Rightarrow \THH_*^{[2]}(\Z_{(p)})$$
we know that
$\THH_s^{[2]}(\Z_{(p)})$ is finite $p$-torsion for positive $s$ because
$$\THH_*(\Zp)  = \begin{cases}
\Zp, & *=0, \\
\Zp/i, & *=2i-1, \\
0, & \text{ otherwise.}
\end{cases}$$
By \eqref{eq:thhzpfp} and using the notation introduced below it,
this implies that there are integers $a_1,a_2,\dots$ such that
$$ \THH_s^{[2]}(\Z_{(p)})
\begin{cases}
  0, & 2p\not|\,s, \\
\Z/p^{a_t}\{\tilde {y}_t\}, & s=2pt,
\end{cases}
$$
where the $\tilde{y}_t$ are generators of the given cyclic groups
which are sent to the corresponding generators $y_t$
in $\THH_*^{[2]}(\Zp, \F_p)$.
We will show
\begin{lem} \label{lem:bjorn}
The function $a\colon\N \to \N$ factors over the $p$-adic valuation
$v\colon \N \to \N$,
$a_t=b_{v(t)}$, with $b\colon\N \to \N$ a \emph{strictly} increasing
function with positive values and $b_0=1$.
\end{lem}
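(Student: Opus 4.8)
The plan is to analyze the cofiber sequence \eqref{eq:cofib} degree by degree, exploiting the fact that it is a sequence of $\THH_*^{[2]}(\Z_{(p)})$-modules and that the multiplicative structure of $\THH_*^{[2]}(\Z_{(p)}, \F_p)$ is known from \eqref{eq:thhzpfp}. First I would pin down $a_1$: in degree $2p$ the group $\THH_{2p}^{[2]}(\Z_{(p)})$ is cyclic of order $p^{a_1}$ on a generator $\tilde y_1$ mapping to $y=y_1$, and in degree $2p+1$ the target $\THH_{2p+1}^{[2]}(\Z_{(p)}, \F_p)$ is spanned by $z$. The connecting map $\delta$ sends $z$ into $\Sigma\THH_{2p}^{[2]}(\Z_{(p)})$; I would argue, using that $\THH_*^{[2]}(\Z_{(p)}, \F_p)$ is a free module over $\F_p$ on the $y_t$ and $zy_t$ together with the fact that multiplication by $p$ is zero on the mod-$p$ term, that $\delta(z)$ must be $\tilde y_1$ up to a unit, forcing $b_0 = a_1 = 1$. (This is essentially the statement that $\THH^{[2]}(\Z_{(p)})$ is connected in the relevant range and the Bockstein on $z$ detects $y_1$.)

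Next I would propagate multiplicatively. The key input is the divided-power relation: $y_{p^i}^p = 0$ in the mod-$p$ term, while $y_1^{t_0} y_p^{t_1}\cdots = y_t$ assembles the $p$-adic digits. Given $\tilde y_1$ with $p\tilde y_1 = 0$, the module structure shows $r(\tilde y_1^{k}) = y_1^{k}$ for $0 \le k \le p-1$, so $a_t = 1$ whenever $v(t) = 0$, i.e. $b_0 = 1$ consistently. For $t$ with $v(t) \ge 1$ one writes $t = p\cdot(t/p)$ and uses $\delta$ on $zy_{t/p}$-type classes: the relation $x_n^p - px_{n+1}$ that we are trying to establish corresponds to the statement $r(\tilde y_{p^i}^p) = 0$ but $\tilde y_{p^i}^p = p\,\tilde y_{p^{i+1}}$ (up to units) in the integral term, which both raises the order from $p^{b_i}$ to $p^{b_{i+1}} = p^{b_i+1}\cdot(\text{correction})$ — so I would set up an induction on $i$ showing $b_{i+1} = b_i + 1$ is \emph{not} quite forced, only $b_{i+1} > b_i$; strict monotonicity plus $b_0 = 1$ is exactly the content of the lemma, and the precise value $b_i = i+1$ is deferred to the relation in Theorem~\ref{thm:thhzp}. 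The factorization $a_t = b_{v(t)}$ itself follows because the generator $\tilde y_t$ is, up to units and lower-order terms, $\tilde y_{p^{v(t)}}$ times a class of exponent dividing that of $\tilde y_1$, and the latter has exponent $p$, so the $p^{a_t}$-torsion is governed entirely by the top divided power $y_{p^{v(t)}}$ appearing.

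The main obstacle I anticipate is controlling the multiplicative extensions in the $\Tor$ spectral sequence $\Tor_{*,*}^{\THH_*(\Z_{(p)})}(\Z_{(p)}, \Z_{(p)}) \Rightarrow \THH_*^{[2]}(\Z_{(p)})$ well enough to identify $\tilde y_{p^i}^p$ with a unit multiple of $p\,\tilde y_{p^{i+1}}$ rather than merely with something of the correct order: a priori $\tilde y_{p^i}^p$ could be $p^c\,\tilde y_{p^{i+1}}$ for some $c \ge 1$, which would only give $b_{i+1} \ge b_i + 1$ and leave the \emph{strictness} intact but not the later exact relation. To get strict monotonicity (which is all the lemma asks) it suffices to show $\tilde y_{p^i}^p \ne 0$, equivalently that the exponent genuinely grows; this I would extract from the fact that $\THH_s^{[2]}(\Z_{(p)})$ is concentrated in degrees divisible by $2p$ with exactly one cyclic summand each, so the product $\tilde y_{p^i}^p$, living in degree $2p\cdot p^{i+1}$, must hit that summand, and it cannot be a unit multiple of $\tilde y_{p^{i+1}}$ because $r(\tilde y_{p^i}^p) = y_{p^i}^p = 0$ while $r(\tilde y_{p^{i+1}}) = y_{p^{i+1}} \ne 0$ — hence it is a nonzero element of the image of $p$, forcing $a_{p^{i+1}} = b_{i+1} > b_i$. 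The positivity of $b$ and $b_0 = 1$ then close the argument.
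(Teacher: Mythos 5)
Your high-level strategy is the same as the paper's (the long exact sequence in degrees $2pt$, multiplicativity of $r$, lifting the $y_t$, and forcing growth of the torsion order through the relation between $\tilde y_{p^i}^p$ and $\tilde y_{p^{i+1}}$), but the decisive mechanism is missing: you never use that $\delta$ is an \emph{injective} $\THH_*^{[2]}(\Zp)$-module map in these degrees. The paper's proof runs entirely on the identity $\delta(z\,y_1^{p-1}\cdots y_{p^{n-1}}^{p-1})=\delta(z)\,\tilde y_1^{p-1}\cdots\tilde y_{p^{n-1}}^{p-1}=p^{a_{p^{n-1}}-1}\tilde y_{p^{n-1}}^p\neq 0$ (nonzero by injectivity of $\delta$, and computed by induction using the previously established relation $\tilde y_{p^{v}}=p^{a_{p^{v}}-a_{p^{v-1}}}\tilde y_{p^{v-1}}^p$); together with $p\,\delta(-)=0$ and cyclicity of $\THH_{2p^{n+1}}^{[2]}(\Zp)$ this pins the order of $\tilde y_{p^{n-1}}^p$ at exactly $p^{a_{p^{n-1}}}$ and identifies it with $p^{a_{p^n}-a_{p^{n-1}}}\tilde y_{p^n}$, whence $a_{p^n}>a_{p^{n-1}}$. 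In your version two steps fail at exactly this point. First, your argument that $\tilde y_{p^i}^p\neq 0$ (``must hit that summand \dots hence it is a nonzero element of the image of $p$'') is a non sequitur: $r(\tilde y_{p^i}^p)=y_{p^i}^p=0$ only places the element in $p\cdot\THH^{[2]}_{2p^{i+2}}(\Zp)$, and nothing you say excludes that it is zero. Second, even granting $\tilde y_{p^i}^p\neq 0$, strictness does not follow: writing $\tilde y_{p^i}^p=p^c u\,\tilde y_{p^{i+1}}$ with $c\geq 1$ and $u$ a unit, nonvanishing only gives $a_{p^{i+1}}\geq c+1$, which is perfectly compatible with $a_{p^{i+1}}\leq a_{p^i}$; what is needed is the lower bound $p^{a_{p^i}-1}\tilde y_{p^i}^p\neq 0$, i.e.\ that the $p$-th power has \emph{full} order, and that is precisely what the $\delta$-computation supplies. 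The same gap affects the factorization $a_t=b_{v(t)}$ for general $t$: the product description of $\tilde y_t$ easily gives the upper bound $a_t\leq a_{p^{v(t)}}$, but the lower bound again comes from applying the injective module map $\delta$ to the class $z\,y_{t-1}$ (in degree $2pt+1$; note it is $y_{t-1}$, not a ``$zy_{t/p}$-type'' class), which your outline does not do.

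There is also a gap at the base case $b_0=a_1=1$. Freeness of $\THH_*^{[2]}(\Zp,\F_p)$ over $\F_p$ and the vanishing of multiplication by $p$ on it only yield $\delta(z)=p^{a_1-1}\tilde y_1$ up to a unit; the exact sequence and the mod-$p$ computation are consistent with $\THH_{2p}^{[2]}(\Zp)\cong\Z/p^{a}$ for any $a\geq 1$, so they cannot force $a_1=1$. The paper closes this with an extra integral input: in the Tor spectral sequence $\Tor^{\THH_*(\Zp)}_{*,*}(\Zp,\Zp)$ the only contribution in total degree $2p$ is a single $\Z/p$ in bidegree $(1,2p-1)$, which bounds the order by $p$. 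You would need to add an argument of this kind (you invoke the Tor spectral sequence only later, and for a different purpose).
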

\begin{proof}
To this end we use induction on the following statement
$\mathbf{P(n)}$ for positive integers $n$.
The generators $\tilde{y}_t$ are chosen inductively.

$\mathbf{P(n)}$: For positive integers $s$, $t$ such that $v(s)$,
$v(t)$ are less than
$n$ the following properties hold:
\begin{enumerate}
\item[(1)] If $v(s)=v(t)$, then $a_s=a_t$,
\item[(2)] If $v(s)>v(t)$, then $a_s>a_t$,
\item[(3)] If $s=s_0+s_1p+\dots+s_{n-1}p^{n-1}$ is the $p$-adic expansion of $s$
(so that $0\leq s_0,\dots,  s_n<p$), then
$\tilde{y}_s=\tilde{y_1}^{s_0}\dots\tilde{y}_{p^{n-1}}^{s_{n-1}}$,
\item[(4)] If $n>v$, then
  $\tilde{y}_{p^v}=p^{a_{p^v}-a_{p^{v-1}}}\tilde{y}_{p^{v-1}}^p$.
\end{enumerate}

We will repeatedly be considering the cofiber sequence
\eqref{eq:cofib}. In homotopy, the maps are  trivial except in degrees
of the form
$2pt$ (for varying $t$) in which case they are
$$
\xymatrix@1{
0 \ar[r] & {\F_p\{zy_{t-1}\}} \ar[r]^-\delta & {\THH_{2pt}^{[2]}(\Zp)}
\ar[r]^p & {\THH_{2pt}^{[2]}(\Zp)} \ar[r]^-r &  {\F_p\{y_{t}\}} \ar[r]
& 0
}
$$
forcing all the $a_t$ to be positive. For any generator $w$,
$\F_p\{w\}$ denotes the graded vector space generated by
$w$.
Here $r$ is multiplicative and $\delta$ is a
$\THH_*^{[2]}(\Zp)$-module map.  By the surjectivity of $r$
we have that the $y_t$'s can be lifted to integral classes.

\subsubsection*{Establishing $\mathbf{P(1)}$}
\label{sec:p1}

Let $t=1$.  The sequence
$$\xymatrix{0 \ar[r] & \F_p\{z\} \ar[r]^-\delta & \THH_{2p}^{[2]}(\Zp) \ar[r]^p &
\THH_{2p}^{[2]}(\Zp) \ar[r]^-r & \F_p\{y_{1}\} \ar[r] & 0. }
$$
shows that $a_1>0$ and by adjusting $z$ up to a unit we may choose
$\tilde{y}_1$ so that
$\delta(z)=p^{a_1-1}\tilde{y}_1$ and $r(\tilde{y}_1)=y_1$.
In the Tor-spectral sequence we only get a $\Z/p\Z$ in bidegree $(1,
2p-1)$ which survives and shows that $a_1=1$, and so
$\delta(z)=\tilde{y}_1$.

If $1<t<p$ the sequence
$$
\xymatrix{
0 \ar[r] & \F_p\{zy_1^{t-1}\} \ar[r]^-\delta & \THH_{2pt}^{[2]}(\Zp)
\ar[r]^p & \THH_{2pt}^{[2]}(\Zp) \ar[r]^-r & \F_p\{y_{1}^t\} \ar[r] &
0}
$$
gives that $\delta(zy_1^{t-1})=\delta(z)\cdot
\tilde{y}_1^{t-1}=\tilde{y}_1^{t}\neq0$, $p\tilde{y}_1^{t}=0$ and
$r(\tilde{y}_1^{t})=y_1^t\neq 0$.  The last point shows that
$\tilde{y}_1^t$ is not divisible by $p$ and
hence we can choose it as our generator: $\tilde{y}_t =
\tilde{y}_1^t$, and furthermore, this generator is killed by $p$, so $a_t=1$.

If $t=t_0+t_1p$ with $0< t_0 <p$, then the sequence
$$\xymatrix{
0 \ar[r] & \F_p\{zy_1^{t_0-1}y_{t_1p}\} \ar[r]^-\delta &
\THH_{2pt}^{[2]}(\Zp) \ar[r]^p & \THH_{2pt}^{[2]}(\Zp) \ar[r]^-r &
\F_p\{y_{1}^{t_0}y_{t_1p}\} \ar[r] & 0}
$$
gives that
$\delta(zy_1^{t_0-1}y_{t_1p})=\delta(z)\cdot\tilde{y}_1^{t_0-1}\tilde{y}_{t_1p}
= \tilde{y}_1^{t_0}\tilde{y}_{t_1p}\neq0$,
$p\tilde{y}_1^{t_0}\tilde{y}_{t_1p} = 0$ and
$r(\tilde{y}_1^{t_0}\tilde{y}_{t_1p}) = {y_1}^{t_0}{y_{t_1p}}\neq 0$ for
any choice of  a lift
$\tilde{y}_{t_1p}$.  The last point shows that
$\tilde{y}_1^{t_0}\tilde{y}_{t_1p}$ is not divisible by $p$ and hence
we can choose it
as our generator: $\tilde{y}_t=\tilde{y}_1^{t_0}\tilde{y}_{t_1p}$, and
furthermore, this generator is killed by $p$, so $a_t=1$.

Note that we may reconsider our choice of $\tilde{y}_{t_1p}$ later,
and so the exact choice of $\tilde{y}_t$
may still change within these bounds, but the choices of $\tilde{y}_1,
\ldots, \tilde{y}_{p-1}$ remain fixed from now on.
Hence $\mathbf{P(1)(1)}-\mathbf{P(1)(3)}$ are established and as
$\mathbf{P(1)(4)}$ is vacuous we have shown $\mathbf{P(1)}$.

\subsubsection*{Establishing $\mathbf{P(n+1)}$}
\label{sec:pnplus1}
Now, assume $\mathbf{P(n)}$.  First, consider the case $t=p^{n}$. For
$\mathbf{P(n+1)(4)}$ we only have to show that
$$\tilde{y}_{p^n}=p^{a_{p^n}-a_{p^{n-1}}}\tilde{y}_{p^{n-1}}^p,$$
and that $a_{p^n}>a_{p^{n-1}}$.  Consider the sequence
$$
\xymatrix{
0 \ar[r] & \F_p\{zy_1^{p-1}\dots y_{p^{n-1}}^{p-1}\} \ar[r]^-\delta &
\THH_{2p^n}^{[2]}(\Zp) \ar[r]^p &
\THH_{2p^n}^{[2]}(\Zp) \ar[r]^-r & \F_p\{y_{p^n}\} \ar[r] & 0.}
$$

Firstly, by induction we have that
\begin{align*}
  \delta(zy_1^{p-1}\dots
  y_{p^{n-1}}^{p-1})&=\tilde{y}_1\tilde{y}_1^{p-1}\dots
  \tilde{y}_{p^{n-1}}^{p-1}\\
  &=p^{a_p-a_1}\tilde{y}_p\tilde{y}_p^{p-1}\dots \tilde{y}_{p^{n-1}}^{p-1}\\
  &=p^{a_p-a_1}p^{a_{p^2}-a_p}\tilde{y}_{p^2}\tilde{y}_{p^2}^{p-1}\dots
  \tilde{y}_{p^{n-1}}^{p-1}=\dots\\
  &=p^{a_{p^{n-1}}-1}\tilde{y}_{p^{n-1}}^p\neq 0.
\end{align*}
Secondly, $$p\delta(zy_1^{p-1}\cdots
y_{p^{n-1}}^{p-1}) = pp^{a_{p^{n-1}}-1}\tilde{y}_{p^{n-1}}^p =
p^{a_{p^{n-1}}}\tilde{y}_{p^{n-1}}^p = 0.$$
Together this shows that (up to a unit) $\delta(zy_1^{p-1}\cdots
y_{p^{n-1}}^{p-1})=p^{a_{p^n}-1}\tilde{y}_{p^n}$, and that
$\tilde{y}_{p^{n-1}}^p=p^{a_{p^n}-a_{p^{n-1}}}\tilde{y}_{p^n}$, and
since $y_{p^{n-1}}^p=0$ that $a_{p^n}>a_{p^{n-1}}$.

Now, for $\mathbf{P(n+1)(1)}$ and $\mathbf{P(n+1)(2)}$, consider a
general $t$ with $v(t)=n$ and write
$t={t_n}p^n+sp^{n+1}$ with $0<{t_n}<p$.  The exact sequence
$$
\xymatrix{
  \F_p\{zy_1^{p-1}\dots
  y_{p^{n-1}}^{p-1}y_{p^n}^{{t_n}-1}y_{sp^{n+1}}\} \ar[r]^(0.65)\delta
  & \THH_{2pt}^{[2]}(\Zp) \ar[r]^p &
  \THH_{2pt}^{[2]}(\Zp) \ar[r]^r & \F_p\{y_{p^n}^{{t_n}}y_{sp^{n+1}}\}
}
$$
gives that
\begin{align*}
  \delta(zy_1^{p-1}\dots y_{p^{n-1}}^{p-1}y_{p^n}^{{t_n}-1}y_{sp^{n+1}})
&=\tilde{y}_1\tilde{y}_1^{p-1}\dots
\tilde{y}_{p^{n-1}}^{p-1}\tilde{y}_{p^n}^{{t_n}-1}\tilde{y}_{sp^{n+1}}\\
&=p^{a_{p^{n}}-1}\tilde{y}_{p^n}^{{t_n}}\tilde{y}_{sp^{n+1}}\neq 0,
\end{align*}
but $p\delta(zy_1^{p-1}\dots
y_{p^{n-1}}^{p-1}y_{p^n}^{{t_n}-1}y_{sp^{n+1}})=p^{a_{p^{n}}}\tilde{y}_{p^n}^{{t_n}}
\tilde{y}_{sp^{n+1}}=0$
and
$r(\tilde{y}_{p^n}^{{t_n}}\tilde{y}_{sp^{n+1}})=y_{p^n}^{{t_n}}y_{sp^{n+1}}\neq
0$.  Again, the last point shows that
$\tilde{y}_{p^n}^{{t_n}}\tilde{y}_{sp^{n+1}}$ is not divisible by $p$,
and so we may choose
$\tilde{y}_t=\tilde{y}_{p^n}^{{t_n}}\tilde{y}_{sp^{n+1}}$, and
furthermore that this generator is annihilated by
$p^{a_{p^{n}}}$, but not by $p^{a_{p^{n}}-1}$, so that $a_t=a_{p^n}$.

Lastly, by $\mathbf{P(n)(3)}$, we have that if
$s=s_0+s_1p+\dots+s_{n-1}p^{n-1}$ is the $p$-adic expansion of $s$,
then
$\tilde{y}_s=\tilde{y}_1^{s_0}\dots\tilde{y}_{p^{n-1}}^{s_{n-1}}$.  If
$t=s+s_np^n$, then
$r(\tilde{y}_s\tilde{y}_{p^n}^{s_n})=y_t$, so we can choose
$\tilde{y}_t=\tilde{y}_s\tilde{y}_{p^n}^{s_n}$
as desired in $\mathbf{P(n+1)(3)}$.
\end{proof}

\subsubsection*{Background on Bocksteins}

Let $(C_*,\partial)$ be a complex of free abelian groups and assume
$\alpha\in C_n$ has the property that $\alpha\otimes 1$ is a cycle in
$C_*\otimes\F_p$.  That the Bockstein
$\beta_{i-1}[\alpha\otimes 1]$ is defined and equal to zero for some
$i\geq 2$, means that there exist $\gamma\in C_n$ and cycle $\delta\in
C_{n-1}$ so that $\partial(\alpha+p\gamma)=p^i \delta$, and in that
case,
$\beta_{i}[\alpha\otimes 1] =[\delta\otimes 1]$.

Assume we have a short exact sequence of complexes of free abelian
groups $0\to B_*\to C_* \to A_*\to 0$.  Choosing a section in each
degree, we may assume $C_n= A_n\oplus B_n$  for all $n$.  Suppose we
have $a\in A_n$ and $b\in B_n$ so that $[a+b]$ represents a cycle in
$C_*\otimes \F_p$  with  $\beta_{i-1}([(a+b)\otimes 1]) =[0]\in
H_{n-1}(C_*\otimes\F_p)$.  As above, there exist $c\in A_n$, $d\in
B_n$, $e\in A_{n-1}$, $f\in B_{n-1}$  with $e+f\in C_{n-1}$ a cycle
with
$\partial(a+b+p(c+d)) = p^i (e+f)$,
and in that case $\beta_{i} ([(a+b)\otimes 1] )=[(e+f)\otimes 1]$.
Then  if $[e\otimes 1]\neq [0]\in H_{n-1}(A_*\otimes\F_p)$, we get
that  $\beta_{i} ([(a+b)\otimes 1] ) \neq [0]$, since $[(e+f)\otimes
1]
\mapsto [e\otimes 1]\neq [0]$ by the homomorphism induced by the
projection $C_*\to A_*$.

\medskip

More generally, consider a filtered complex $C_*$ of free abelian
groups.   Assume
we have a chain
$a\in E(C_*)^0_{s,t}$ in the associated spectral sequence such that
$[a\otimes 1]$ survives to $E(C_*\otimes\F_p)^\infty_{s,t}$ in the mod
$p$ spectral sequence.  If
we know that the class $[(a+b)\otimes 1]\in H_{s+t}(C_*\otimes \F_p)$
with $b\in F_{s-1}(C_*)$ which
$[a\otimes 1]$ represents in $E(C_*\otimes\F_p)^\infty_{s,t}$
satisfies $\beta_{i-1}([(a+b)\otimes 1]) =[0]\in
H_{s+t-1}(C_*\otimes\F_p)$, but that $d^0(a\otimes 1)
=p^i (e\otimes 1)$ and $[e\otimes 1] \neq [0]\in
E(C_*\otimes\F_p)^1_{s, t-1}$, then $\beta_{i} ([(a+b)\otimes 1] )\neq
[0]$.

\subsubsection*{The $p$-order of the multiplicative generators}
We will  calculate $\THH^{[2]}_*(\Zp)$ by studying its Hurewicz image in
$H_*( \THH^{[2]} (\Zp);\F_p)$, using the model
$$ \THH^{[2]} (\Zp)\simeq B(H\Zp, \THH (\Zp), H\Zp).$$

We use the filtration by simplicial skeleta.
We denote  $H_*(H\Zp; \F_p)$ by $\bA$, and by B\"okstedt,
$$H_*( \THH(\Zp); \F_p) \cong \bA \otimes \F_p[x_{2p}] \otimes
\Lambda[x_{2p-1}],$$
where the augmentation $\THH(\Zp)\to H\Zp$ induces the projection $\bA
\otimes \F_p[x_{2p}] \otimes \Lambda[x_{2p-1}]\to\bA$ sending $x_{2p}$
and $x_{2p-1}$ to zero.
We get that
$$ E^1_{*,*} \cong B (\bA, \bA\otimes\F_p[x_{2p}] \otimes \Lambda[x_{2p-1}],\bA)$$
 is isomorphic to
$$B(\bA, \bA, \bA) \otimes B(\F_p, \F_p[x_{2p}], \F_p) \otimes
B(\F_p,  \Lambda[x_{2p-1}], \F_p),$$
and so its homology is
$$E^2_{*,*} \cong \bA \otimes \Lambda[y_{2p+1}] \otimes \Gamma[y_{2p}]$$
with $y_{2p+1}= 1\otimes x_{2p}\otimes 1$ and $y_{2p}= 1\otimes x_{2p-1}$ and
$y_{2p}^{(a)}= 1\otimes x_{2p-1}^{\otimes a}\otimes 1$.

The dimensions in each total degree in the $E^2$-term account for
$p$-torsion of rank $1$ in each positive dimension
divisible by $2p$, and from knowing $\THH_*^{[2]}(\Zp; \F_p)$
\cite[Theorem 3.1]{dlr} we get that this agrees with the abutment of
the spectral sequence, so it has to collapse at $E^2$.

We use this to prove Theorem~\ref{thm:thhzp}.  By
Lemma~\ref{lem:bjorn}, the only remaining problem is to determine the
order of the $p$-torison in each dimension divisible by $2p$.
\begin{lem} \label{lem:ayelet}
The $p$-torsion in $\THH^{[2]}_{2pt}(\Zp)$ is precisely $\Zp/pt \cong
\Zp/p^{v_p(t)+1}$.
\end{lem}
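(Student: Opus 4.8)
The plan is to determine the exponents $a_{p^n}$ left open in Lemma~\ref{lem:bjorn}. That lemma gives $\THH^{[2]}_{2pt}(\Zp)\cong\Z/p^{a_t}$ with $a_t=b_{v_p(t)}$ and $b$ strictly increasing with $b_0=1$, so $b_n=a_{p^n}\geq n+1$ for free, and it suffices to prove $b_n\leq n+1$: then $a_t=v_p(t)+1$ and $\THH^{[2]}_{2pt}(\Zp)\cong\Z/p^{v_p(t)+1}\cong\Zp/pt$. I would rephrase the target in the mod~$p$ Bockstein spectral sequence. By the cofiber sequence~\eqref{eq:cofib}, the two classes attached to the cyclic group $\THH^{[2]}_{2p^{n+1}}(\Zp)\cong\Z/p^{a_{p^n}}$ are $y_{p^n}$ (with $r(\tilde y_{p^n})=y_{p^n}$) and $zy_{p^n-1}$ (with $\delta(zy_{p^n-1})$ generating the $p$-torsion), and $a_{p^n}$ is precisely the index of the first nonzero Bockstein on $zy_{p^n-1}$, which hits $y_{p^n}$. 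Since $b$ strictly increasing already forces $\beta_j(zy_{p^n-1})=0$ for $j\leq n$, the lemma reduces to showing $\beta_{n+1}(zy_{p^n-1})\neq 0$.

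To produce this Bockstein I would use the model $\THH^{[2]}(\Zp)\simeq B(H\Zp,\THH(\Zp),H\Zp)$ with the skeletal filtration, realised by a filtered complex of free $\Z_{(p)}$-modules so that the internal differential $d^0$ is the one appearing in the filtered-complex Bockstein criterion of the \emph{Background on Bocksteins} subsection (the Hurewicz map into $H_*(\THH^{[2]}(\Zp);\F_p)$ being compatible with Bocksteins). Fix a chain $\hat x_{2p-1}$ representing the generator of $\THH_{2p-1}(\Zp)=\Z/p$ and a chain $\hat x_{2p}$ with $\partial\hat x_{2p}=p\,\hat x_{2p-1}$; the coefficient is \emph{exactly} $p$ because that homotopy group has exponent $p$ (equivalently $\beta_1(x_{2p})=x_{2p-1}$). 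Using B\"okstedt's description of $H_*(\THH(\Zp);\F_p)$, the Hurewicz image of $zy_{p^n-1}$ is represented, up to a unit and up to lower filtration, by the shuffle product $\alpha=[\hat x_{2p}]\cdot[\hat x_{2p-1}|\cdots|\hat x_{2p-1}]$ with $p^n-1$ entries in the second factor, while $y_{p^n}=y_{2p}^{(p^n)}$ is represented by $\delta=[\hat x_{2p-1}|\cdots|\hat x_{2p-1}]$ with $p^n$ entries. Because $d^0$ affects only the $\hat x_{2p}$-slot, and because the shuffle product in the bar construction realises the divided-power relation $\gamma_1(y_{2p})\,\gamma_{p^n-1}(y_{2p})=\binom{p^n}{1}\gamma_{p^n}(y_{2p})=p^n\gamma_{p^n}(y_{2p})$, one computes $d^0(\alpha)=\pm p\,\bigl([\hat x_{2p-1}]\cdot[\hat x_{2p-1}|\cdots|\hat x_{2p-1}]\bigr)=\pm p^{n+1}\delta$. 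Now $\delta$ is a permanent cycle whose class $y_{2p}^{(p^n)}$ is nonzero on $E^2=E^\infty$, hence nonzero on $E^1$; feeding $d^0(\alpha)=\pm p^{n+1}\delta$ together with $\beta_n(zy_{p^n-1})=0$ into the criterion gives $\beta_{n+1}(zy_{p^n-1})=\pm[y_{2p}^{(p^n)}]\neq 0$, as needed.

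The substance of the proof will be in making this chain-level computation legitimate, and I expect that to be the main obstacle. One must set up compatible chain representatives in an honest model of $B(H\Zp,\THH(\Zp),H\Zp)$: in particular arrange $\partial\hat x_{2p}=p\,\hat x_{2p-1}$ on the nose and deal with $\hat x_{2p-1}^2$, which vanishes in homotopy but may only be a boundary at chain level; both adjustments change $\alpha$ and $\delta$ only by boundaries and by terms divisible by $p^{n+2}$, so $d^0(\alpha)=\pm p^{n+1}\delta$ survives modulo such terms and still forces $\beta_{n+1}\neq 0$, but this needs care. One should also confirm that $\delta$ represents the $\bA$-degree-zero class $1\otimes y_{2p}^{(p^n)}$ matching the cyclic generator $\tilde y_{p^n}$, so that the extra mod~$p$ homology coming from $\bA=H_*(H\Zp;\F_p)$, which supports its own lower Bocksteins, does not interfere with reading off $\beta_{n+1}(zy_{p^n-1})=y_{p^n}$. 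The shuffle combinatorics giving the coefficient $\binom{p^n}{1}$, and the deduction of the case of general $t$ from Lemma~\ref{lem:bjorn}, are routine.
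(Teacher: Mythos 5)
Your proposal is correct and follows essentially the same route as the paper: the same model $B(H\Zp,\THH(\Zp),H\Zp)$ filtered by simplicial skeleta, the same filtered-complex Bockstein criterion, the lower bound and vanishing of lower Bocksteins taken from Lemma~\ref{lem:bjorn}, and the same chain-level $d^0$-computation --- your shuffle-product/divided-power count $p\cdot\binom{p^n}{1}=p^{n+1}$ is exactly the paper's alternating sum over the insertion positions of $\tilde x_{2p}$ among the copies of $\tilde x_{2p-1}$, which gives the coefficient $pt$ for general $t$ directly. The technical point you flag as the main obstacle (honest chain representatives with $\partial\hat x_{2p}=p\,\hat x_{2p-1}$ and no interference from $H_*(H\Zp;\F_p)$) is resolved in the paper by mapping in from $K(\F_p,2p-1)^{\wedge t}$ via B\"okstedt's splitting of $\THH(\Zp)$ and then transferring from homology to homotopy Bocksteins through the Hurewicz map, just as you indicate.
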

\begin{proof}
We know from Lemma \ref{lem:bjorn}  that for $t=p^a m$, $(p, m)=1$, in
dimension $2pt$ the order of the torsion
is divisible by $p^{a+1}$. We will use the general observation about
Bocksteins above for
$$C_*=C_* (\Omega^\infty ( \THH^{[2]} (\Zp)) ; \Z) =
C_* (\Omega^\infty B(H\Zp, \THH (\Zp), H\Zp); \Z),$$
filtered by simplicial skeleta of the bar construction, to get that
the torsion is exactly $p^{a+1}$.

\medskip
Fixing a $t$, we have two quasi-isomorphisms (letting $s$ vary)
\begin{align*}
C_s  & (\Omega^\infty B_t(H\Zp, \THH (\Zp), H\Zp); \Z) \\
\to
& C_{s+t}  (\Delta^t \times \Omega^\infty B_t(H\Zp, \THH (\Zp), H\Zp),
\partial\Delta^t \times \Omega^\infty B_t(H\Zp, \THH (\Zp), H\Zp); \Z)  \\
\to
&  E^0_{t, s}
\end{align*}
and we call their composition $\varphi$.

We know by B\"okstedt that additively
$\THH (\Zp) \simeq H\Zp \vee \Sigma^{2p-1} H\F_p \vee \cdots,$
so we can map
\begin{align*}
S^0  & \wedge K(\F_p, 2p-1)^{\wedge t} \wedge S^0 =
S^0\wedge (\Omega^\infty (\Sigma^{2p-1} H\F_p ))^{\wedge t} \wedge S^0 \\
\to &
\Omega^\infty H\Zp \wedge (\Omega^\infty (\THH (\Zp) )^{\wedge t}
\wedge \Omega^\infty H\Zp \to
\Omega^\infty(  H\Zp \wedge (\THH (\Zp) ^{\wedge t} \wedge H\Zp).
\end{align*}
We call this composition $\psi$.  It induces
$$\psi_*:\ C_*(K(\F_p, 2p-1)^{\wedge t}; \Z) \to C_* (\Omega^\infty
(H\Zp \wedge \THH (\Zp)^{\wedge t} \wedge H\Zp); \Z),$$
so composing we get a map of complexes
$$\varphi\circ \psi_*:\  C_*( K(\F_p, 2p-1)^{\wedge t}; \Z) \to  E^0_{t, *}.$$
On the Eilenberg Mac Lane space $ K(\F_p, 2p-1)$, we have a $2p$-chain
with integer coefficients $\tilde x_{2p}$ so that $[\tilde x_{2p}]$
(mod $p$) generates $H_{2p}( K(\F_p, 2p-1); \F_p)\cong \F_p$ and
$\partial \tilde x_{2p} = p \tilde x_{2p-1}$ for a chain
$\tilde x_{2p-1}$ so that $[\tilde x_{2p-1}]$ (mod $p$) generates
$H_{2p-1}(K(\F_p, 2p-1); \F_p) \cong \F_p$.  For these elements,
$\beta_1([\tilde x_{2p}]) = [\tilde x_{2p-1}]$.  Note that these
elements map to generators of the stable homology in the correct
dimensions.  Thus, $\varphi\circ \psi_*(\tilde x_{2p})\otimes 1$ can
be taken as a representative of $x_{2p}$, and
$\varphi\circ \psi_*(\tilde x_{2p-1})\otimes 1$ can be taken as a
representative of $x_{2p-1}$, and we still have
$d^0( \varphi\circ \psi_*(\tilde x_{2p}))= p \varphi\circ
\psi_*(\tilde x_{2p-1})$ in $E^0_{1, *}$. And more generally,
for any $a,b\geq 0$, in $E^0_{a+b+1, *}$ we also have
$$d^0( \varphi\circ \psi_*( \tilde x_{2p-1}^{\ \wedge a} \wedge \tilde
x_{2p} \wedge  \tilde x_{2p-1}^{\ \wedge b})) =
p \varphi\circ \psi_*(\tilde x_{2p-1}^{\ \wedge(a+b+1)}).$$

\medskip
We know that the class $(\varphi\circ \psi_*( \tilde x_{2p-1}^{\wedge
  a} \wedge \tilde x_{2p} \wedge  \tilde x_{2p-1}^{\wedge b})) \otimes 1$
represents
 the class $1\otimes x_{2p-1}^{\otimes a} \otimes x_{2p} \otimes
 x_{2p-1}^{\otimes b} \otimes 1$ which survives to $E^2_{a+b+1, *}$
 and therefore to $E^\infty_{a+b+1, *}$, and similarly for
 $(\varphi\circ \psi_*(\tilde x_{2p-1}^{\wedge(a+b+1)}) )\otimes 1$
 and
$1\otimes x_{2p-1}^{\otimes a+b+1} \otimes 1$.

And so, if $t=p^a m$ with $(p, m)=1$,
$$d^0 (\sum_{i=0}^{t-1} (-1)^i (\varphi\circ \psi_*( \tilde
x_{2p-1}^{\wedge i} \wedge \tilde x_{2p} \wedge  \tilde
x_{2p-1}^{\wedge t-1-i}))
\otimes 1 = pt \cdot (\varphi\circ \psi_*(\tilde
x_{2p-1}^{\wedge(t)}))\otimes 1 = p^{a+1} m \cdot
(\varphi\circ \psi_*(\tilde x_{2p-1}^{\wedge(t)}) )\otimes 1. $$
The mod $p$ homology class which is the image under the Hurewicz map
of $z\gamma_{t-1}(y)$ can be expressed as
$$(1\otimes x_{2p} \otimes 1) (1\otimes x_{2p-1}^{\otimes n-1}\otimes 1)$$
via the bar construction and it is represented by
$(\sum_{i=0}^{t-1} (-1)^i ( \varphi\circ \psi_*(\tilde
x_{2p-1}^{\wedge i} \wedge \tilde x_{2p} \wedge  \tilde
x_{2p-1}^{\wedge
  t-1-i})
\otimes 1$.
From Lemma \ref{lem:bjorn} we have a lower bound on the order of the
torsion and hence
$\beta_{a}(z\gamma_{t-1}(y)) = [0]$  and by the $d^0$ calculation
above $\beta_{a+1}(z\gamma_{t-1}(y)) = \gamma_t(y)$ up to a unit.

This result is a result on stable mod $p$ homology rather than on
stable mod $p$ homotopy,
but since we are applying it to the images under the Hurewicz map of
the two stable mod
$p$ homotopy classes of an Eilenberg Mac Lane space of rank $1$ $p$-torsion, the
Bockstein operators have to do the same on the mod $p$ homotopy.
\end{proof}
\begin{proof}[Proof of Theorem \ref{thm:thhzp}]
Set $x_n = \tilde{y}_{p^n}$. Then we get the $p$-order of these
elements from Lemma
\ref{lem:ayelet} and we worked out the multiplicative relations in
Lemma~\ref{lem:bjorn}.
\end{proof}
\begin{rem}\label{coincidence}
Mike Hill noticed that $\THH_*^{[2]}(\Z_{(p)})$ is abstractly
isomorphic to $\THH^*(\Z_{(p)})$: the calculation of
$\THH^*(\Z_{(p)})$ is due to Franjou and Pirashvili \cite{FP}. We are
not sure whether
this is a coincidence or whether (for some commutative $S$-algebras)
there is a duality between $\THH_*^{[2]}$ and topological Hochschild
cohomology. Note, however, that $\THH_*^{[2]}(\F_p)$ is an exterior
algebra over $\F_p$ on a class in degree three whereas $\THH^*(\F_p)$
is much larger:
$$ \THH^*(\F_p) \cong \F_p[e_0, e_1, \ldots]/(e_0^p, e_1^p, \ldots),
\quad |e_i| = 2p^i $$
\cite[7.3]{fls}, \cite{boe}, so there is no isomorphism of these
groups in general.
\end{rem}

\section{Greenlees' approach to $\THH$}
There is a relative version of the cofiber sequence from \cite[Lemma
7.1]{greenlees} already mentioned in \cite{dlr}. We make it explicit
for later use.  Here and elsewhere $S$ denotes the sphere spectrum.
\begin{lem} \label{lem:cofibers}
Let $R$ be a commutative $S$-algebra and let $C \ra B \ra k$ be
a sequence of maps of commutative
$R$-algebras. Then there is a cofiber sequence of commutative
$k$-\algspta
$$ B \wedge^L_C k \ra \THH^R(C,k) \ra \THH^R(B,k).$$
\end{lem}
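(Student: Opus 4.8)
The plan is to transcribe Greenlees's argument for the absolute statement into the relative setting, staying throughout inside commutative $R$-algebras. First I would fix conventions: for a commutative $R$-algebra $A$ and a commutative $A$-algebra $k$, I read $\THH^R(A,k)$ as relative topological Hochschild homology with coefficients, $\THH^R(A,k)\simeq k\wedge^L_A\THH^R(A)\simeq k\wedge^L_{A\wedge_R A}A$, where $\THH^R(A)=A\otimes\mathbb{S}^1$ is the tensor with the circle in commutative $R$-algebras and $k$ is a module over $A\wedge_R A$ through the multiplication $A\wedge_R A\ra A$ and the structure map $A\ra k$. Applied to $A=C$ and $A=B$ this produces the two $\THH$-terms, the map between them being the one induced by $C\ra B$. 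Since $B\wedge^L_C k$, $\THH^R(C,k)$ and $\THH^R(B,k)$ are all canonically commutative $k$-algebras and the relevant maps are maps of such, it is enough to exhibit the asserted cofibre sequence of underlying $k$-modules.

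For the cofibre sequence itself, the key input is that tensoring a commutative $R$-algebra with a space, and base change of commutative $R$-algebras along the structure maps to $k$, both preserve homotopy pushouts. Writing $\mathbb{S}^1$ as the homotopy pushout $\ast\sqcup^h_{\mathbb{S}^0}\ast$ yields $A\otimes\mathbb{S}^1\simeq A\wedge^L_{A\wedge_R A}A$, hence a presentation of each $\THH^R(A,k)$ as a two-sided bar construction over $A\wedge_R A$, and the map $C\ra B$ as a map of such bar constructions. I would then identify the homotopy fibre of $\THH^R(C,k)\ra\THH^R(B,k)$ by applying the coefficient functor $k\wedge^L_{(-)}(-)$ to the commutative square with corners $C\wedge_R C$, $B\wedge_R B$, $C$, $B$, with the multiplications $C\wedge_R C\ra C$ and $B\wedge_R B\ra B$ as the vertical maps and the maps induced by $C\ra B$ as the horizontal maps, and chasing the resulting iterated homotopy pushout: the fibre that falls out is $B\wedge^L_C k$, and the comparison map $B\wedge^L_C k\ra\THH^R(C,k)$ is the composite of $B\wedge^L_C k\ra k$ with the unit $k\ra\THH^R(C,k)$ of $\THH$, equivalently the evident map of bar constructions. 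Rotating the triangle gives the stated sequence, and naturality of all the constructions in the chain $C\ra B\ra k$ gives the compatibility with the commutative $k$-algebra structures.

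The step I expect to be the main obstacle is precisely this identification of the fibre and of the connecting map. One has to keep careful track of the two a priori distinct module structures occurring in the cyclic bar construction — they agree here because $C$ and $B$ are commutative, but this must be used rather than assumed away — and one has to check that the square above really becomes homotopy cocartesian after applying $k\wedge^L_{(-)}(-)$, as opposed to merely furnishing a compatible filtration with the right associated graded. This is the point at which Greenlees's proof must be transcribed with care. If the direct transcription turns out awkward, a fallback is to deduce the relative statement from the case $R=S$ of \cite[Lemma 7.1]{greenlees} via the base-change identity $\THH^R(A)\simeq\THH(A)\wedge^L_{\THH(R)}R$; but the bar-construction argument above is cleaner, and is the one I would write up.
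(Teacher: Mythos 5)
Your high-level plan---rerun Greenlees's argument for \cite[Lemma 7.1]{greenlees} with the sphere spectrum replaced by $R$, staying inside commutative $R$-algebras---is exactly the paper's proof, which consists of precisely that remark. The genuine problem is the statement you then set out to prove. You reduce the lemma to a claim about underlying $k$-modules and propose to identify $B\wedge^L_C k$ with the homotopy fibre of $\THH^R(C,k)\ra\THH^R(B,k)$. That module-level reading is false. Here ``cofiber sequence of commutative $k$-\algspta'' is meant in the pointed category of \emph{augmented} commutative $k$-algebras: the third term is the cobase change of the second along the augmentation of the first, \ie\ $\THH^R(B,k)\simeq \THH^R(C,k)\wedge^L_{B\wedge^L_C k}k$, and this is exactly the form in which the lemma is used immediately afterwards to produce the spectral sequence $\Tor^{A_*}_{*,*}(\THH^R_*(C,k),k_*)\Rightarrow \THH^R_*(B,k)$ with $A=B\wedge^L_C k$. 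For a counterexample to your reading take $R=C=S$ and $B=k=H\F_2$: the sequence becomes $H\F_2\wedge H\F_2\ra H\F_2\ra \THH(H\F_2)$, and the fibre of the unit $H\F_2\ra\THH(H\F_2)$ has trivial $\pi_2$, while $\pi_2(H\F_2\wedge H\F_2)=\F_2\{\xi_1^2\}\neq 0$; so no such fibre sequence of $k$-modules exists. The algebra-level statement, by contrast, is the familiar equivalence $\THH(H\F_2)\simeq H\F_2\wedge^L_{H\F_2\wedge H\F_2}H\F_2$.

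A related error is your description of the comparison map: $B\wedge^L_C k\ra \THH^R(C,k)$ is \emph{not} the augmentation $B\wedge^L_C k\ra k$ followed by the unit. If it were, cobase change along it would give $\THH^R(C,k)\wedge^L_k\bigl(k\wedge^L_{B\wedge^L_C k}k\bigr)$, which is already wrong for $R=H\Z$, $C=H\Z[x]$, $B=H\Z$, $k=H\F_p$: there $\pi_*(B\wedge^L_C k)\cong\Lambda_{\F_p}(\varepsilon x)$, $\pi_*\THH^R(C,k)\cong\Lambda_{\F_p}(dx)$ and $\THH^R(B,k)\simeq H\F_p$, so the lemma forces the comparison map to send $\varepsilon x$ to a unit multiple of $dx$---in particular it is nonzero on $\pi_1$ and cannot factor through $k$---whereas your map would yield $\Lambda_{\F_p}(dx)\otimes\Gamma_{\F_p}(\varphi^0)$ instead of $\F_p$. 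So the transcription of Greenlees's proof has to (i) construct this non-obvious map, \eg\ simplicially as a map from the two-sided bar construction $B(B,C,k)$ to the cyclic bar construction of $C$ over $R$ with coefficients in $k$, multiplying the $B$-coordinate into $k$, and (ii) identify the cobase change along its augmentation with $\THH^R(B,k)$ in augmented commutative $k$-algebras. Your homotopy-cocartesian-square language gestures in that direction, but as written both the target statement and the connecting map are the wrong ones, so the proposal does not prove the lemma.
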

The proof is obtained from the one of \cite[Lemma 7.1]{greenlees} by
replacing the sphere spectrum by $R$.

\begin{rem}
Note that there are two cofiber sequences for any such sequence $C \ra
B \ra k$, because we can forget the
commutative $R$-algebra structures on $C$ and $B$ and consider them as
commutative $S$-algebras. This
gives a commutative diagram of cofiber sequences
$$ \xymatrix{
{B \wedge_C k} \ar[r] \ar@{=}[d]& {\THH(C,k)} \ar[d] \ar[r] &
{\THH(B,k)} \ar[d]\\
{B \wedge_C k} \ar[r] & {\THH^R(C,k)} \ar[r] & {\,\THH^R(B,k),}
}$$
so $B \wedge_C k$ measures the difference of the absolute and also of the
relative $\THH$-terms of $C$ and $B$.
\end{rem}

Let us abbreviate $B \wedge^L_C k$ by $A$.
Lemma \ref{lem:cofibers} provides an equivalence
$$ \THH^R(B,k) \simeq \THH^R(C,k) \wedge^L_A k$$
and thus we get a spectral sequence whose $E^2$-term is
$$ \Tor_{*,*}^{A_*}(\THH^R_*(C,k), k_*)$$
 which converges to $\THH^R_*(B,k)$.

We will consider the following examples.
\begin{itemize}
\item
Let $\ell$ denote the Adams summand of $p$-local connective topological
complex K-theory,
$ku_{(p)}$, for some odd prime $p$. For
$$ R=\ell \ra C=\ell \ra B=ku_{(p)} \ra k$$
with $k=H\Z_{(p)}$ or $k=H\F_p$ we obtain calculations for
$\THH^\ell_*(ku_{(p)},k)$. We determine $\THH^\ell_*(ku_{(p)})$ by
different means. 

\item
The complexification map from real to complex topological K-theory
$c\colon ko \ra ku$ is a map of commutative
$S$-algebras.
Wood's theorem displays the $ko$-module $ku$ as the cofiber of the
Hopf map $\eta\colon\Sigma ko\to ko$.  Consequently, the $ku$-module
$ku\wedge_{ko}ku$ is the cofiber of $\eta\colon\Sigma ku\to ku$, and
the resulting short exact sequences $$ 0 \ra \pi_{2m}ku \ra
\pi_{2m}(ku \wedge_{ko} ku) \ra
\pi_{2m-1}(\Sigma ku) \ra 0$$
are split via the multiplication map on $ku$, because the map $ku \ra
ku \wedge_{ko} ku$ above is induced by the unit map
 of $ku$ as a commutative $ko$-algebra
 so we get
$$ \pi_{2m}(ku \wedge_{ko} ku) \cong \pi_{2m}ku \oplus \pi_{2m-2}(ku).$$
We will determine the $ku_*$-algebra structure of $\pi_*(ku
\wedge_{ko} ku)$ in Lemma \ref{lem:kukoalg}. This is the input for the
Tor-spectral sequence computing $\THH_*^{ko}(ku)$ and we will identify
$\THH_*^{ko}(ku)$ in Theorem \ref{thm:kukothh}.

We will also use the cofiber sequences of commutative $k$-algebras
$$ ku \wedge_{ko} k \ra ku \ra \THH^{ko}(ku,k)$$
for $k= H\Z_{(2)}$ and $k=H\F_2$ and we will calculate $\THH$
of $ku$ over $ko$ with coefficients in $H\Z_{(2)}$ and $H\F_2$ (see Proposition
\ref{prop:kukohigherthh}).

\item
We propose $ku_{(p)} \wedge_\ell H\F_p$ as a model for $ku/(p, v_1)$
and use the sequence
$$ S \ra H\F_p \ra ku_{(p)} \wedge_\ell H\F_p \ra H\F_p$$
for calculating its $\THH$ with coefficients in $H\F_p$ (Proposition
\ref{prop:thhkumodpv1}).

\item
In Section \ref{sec:quotients} we determine relative topological
Hochschild homology of quotient maps $R \ra R/x$.
\end{itemize}

\section{Relative $\THH$ of $ku_{(p)}$ as a commutative $\ell$-algebra}
Let $p$ be an odd prime.
On the level of coefficients, the map from the connective Adams
summand to $p$-local connective topological complex K-theory
is
$\ell_* = \Z_{(p)}[v_1] \ra \Z_{(p)}[u] = (ku_{(p)})_*$, $v_1
\mapsto u^{p-1}$. The corresponding $p$-complete
periodic extension is a $C_{p-1}$-Galois extension \cite{R}.  However, the
connective extension is not unramified, but it is
a topological analogue of a tamely ramified extension. Rognes defined
a notion of $\THH$-\'etale extensions in \cite[9.2.1]{R}:
A map of commutative $S$-algebras $A  \ra B$ is \emph{formally $\THH$-\'etale},
if the canonical map from $B$ to $\THH^A(B)$ is an
equivalence. For instance, Galois extensions are formally
$\THH$-\'etale \cite[9.2.6]{R}.
We will show that the map $\ell \ra ku_{(p)}$ is not formally $\THH$-\'etale
by determining $\THH^\ell(ku_{(p)})$. Rognes mentions in \cite[p.~59]{R} that
$ku_{(p)} \ra \THH^\ell(ku_{(p)})$ is a $K(1)$-local equivalence and
Sagave showed in \cite{Sa} that the map $\ell \ra ku_{(p)}$ is
log-\'etale. Ausoni
proved that the $p$-completed extension even satisfies Galois descent
for $\THH$ and algebraic
K-theory \cite[Theorem 1.5]{ausoni}:
$$ \THH(ku_p)^{hC_{p-1}} \simeq \THH(\ell_p), \quad K(ku_p)^{hC_{p-1}}
\simeq K(\ell_p). $$

The tame ramification is visible in $\THH$:
\begin{thm} \label{thm:kulthh}
$$\THH_*^\ell(ku_{(p)}) \cong (ku_{(p)})_* \rtimes
((ku_{(p)})_*/u^{p-2}) \langle
y_0,y_1,\ldots \rangle,$$
where $(ku_{(p)})_* \rtimes M$ denotes a square-zero extension of
$(ku_{(p)})_*$ by a $(ku_{(p)})_*$-module $M$.
The degree of $y_i$ is $2pi+3$.
\end{thm}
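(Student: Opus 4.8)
The plan is to compute $\THH^\ell_*(ku_{(p)})$ via the Tor spectral sequence arising from the standard identification
$$\THH^\ell(ku_{(p)}) \simeq ku_{(p)} \wedge^L_{ku_{(p)} \wedge_\ell ku_{(p)}} ku_{(p)},$$
so that the $E^2$-term is $\Tor^{\pi_*(ku_{(p)} \wedge_\ell ku_{(p)})}_{*,*}\bigl((ku_{(p)})_*, (ku_{(p)})_*\bigr)$. The first step is therefore to determine the $(ku_{(p)})_*$-algebra structure of $\pi_*(ku_{(p)} \wedge_\ell ku_{(p)})$. Since $\ell_* = \Z_{(p)}[v_1] \to \Z_{(p)}[u] = (ku_{(p)})_*$ sends $v_1 \mapsto u^{p-1}$, the source is a free module of rank $p-1$ over $\ell_*$, so $\pi_*(ku_{(p)} \wedge_\ell ku_{(p)}) \cong (ku_{(p)})_* \otimes_{\ell_*} (ku_{(p)})_*$ with no higher Tor; writing the two copies of $u$ as $u$ and $\bar u$, this is $\Z_{(p)}[u, \bar u]/(u^{p-1} - \bar u^{p-1})$. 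Setting $\omega = \bar u - u$, one can rewrite this as $(ku_{(p)})_*[\omega]/(f(\omega))$ where $f(\omega) = (u+\omega)^{p-1} - u^{p-1} = \sum_{k=1}^{p-1}\binom{p-1}{k}u^{p-1-k}\omega^k$; note $f(\omega) = \omega \cdot g(\omega)$ with $g(0) = (p-1)u^{p-2}$, which is a unit multiple of $u^{p-2}$. This factorization — $\pi_*(ku_{(p)} \wedge_\ell ku_{(p)}) \cong (ku_{(p)})_*[\omega]/(\omega g(\omega))$ with $g(0) \doteq u^{p-2}$ — is the algebraic shadow of tame ramification and is the crux of the whole computation.

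The second step is to run the Tor computation. Over the ring $R := (ku_{(p)})_*[\omega]/(\omega g(\omega))$ we must resolve the augmentation module $(ku_{(p)})_* = R/(\omega)$. Because $\omega \cdot \omega = \omega g(\omega) \cdot (\text{something})$ is not quite how it works — rather, in $R$ we have the relation $\omega g(\omega) = 0$, and $g(\omega) \equiv g(0) = (p-1)u^{p-2}$ modulo $(\omega)$ — the element $\omega$ is a zero divisor annihilated by $g(\omega)$, and multiplication by $g(\omega)$ on $R/(\omega)$ is multiplication by the non-unit $u^{p-2}$ (up to a unit). Thus the minimal free resolution of $(ku_{(p)})_*$ over $R$ is the $2$-periodic-up-to-twist complex
$$\cdots \xrightarrow{\ \omega\ } R \xrightarrow{\ g(\omega)\ } R \xrightarrow{\ \omega\ } R \xrightarrow{\ \varepsilon\ } (ku_{(p)})_* \to 0,$$
from which $\Tor^R_0 = (ku_{(p)})_*$, $\Tor^R_1 = (ku_{(p)})_*/u^{p-2}$ (the cokernel of multiplication by $g(0) \doteq u^{p-2}$), and for $i \geq 2$ the groups $\Tor^R_i$ are again $(ku_{(p)})_*/u^{p-2}$, placed in the appropriate internal degree. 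Tracking internal degrees: $\omega = \bar u - u$ has degree $2$, so $\Tor^R_1$ sits in total degree $2 + 1 = 3$, contributing the class $y_0$; and each further pair $(\omega, g(\omega))$ in the resolution raises the homological-plus-internal degree by $2 + 2(p-1) = 2p$, giving generators $y_i$ in degree $2pi + 3$ as claimed. So the $E^2$-term already has exactly the stated form: $(ku_{(p)})_*$ in filtration $0$ together with a free $(ku_{(p)})_*/u^{p-2}$-module on $\{y_0, y_1, \ldots\}$ in filtration $\geq 1$.

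The third step is to show the spectral sequence collapses and that the multiplicative extension is the square-zero one. For collapse, the key observation is that everything in positive filtration lies in the bar-filtration-$\geq 1$ part and, crucially, the internal degrees are spread out enough: a differential $d_r$ would have to connect $\Tor^R_i$ to $\Tor^R_{i-r}$ with a total-degree drop of $1$, but the total degrees $\{0\} \cup \{2pi+3 : i \geq 0\}$ of the $E^2$-generators have gaps that no differential of the requisite bidegree can bridge except possibly into filtration $0$ — and a differential landing in the $(ku_{(p)})_*$-summand in filtration $0$ would have to originate in total degree $1$, where $E^2$ vanishes. Hence $E^2 = E^\infty$. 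For the multiplicative structure: the product of any two positive-filtration classes $y_i \cdot y_j$ lies in filtration $\geq 2$ and total degree $2p(i+j) + 6$, which is even; but all of positive-filtration $E^\infty$ is concentrated in odd total degrees $2pi+3$, so $y_iy_j = 0$ for degree reasons, and likewise $y_i$ times the filtration-$0$ part is determined by the module structure. Therefore $\THH^\ell_*(ku_{(p)})$ is exactly the square-zero extension $(ku_{(p)})_* \rtimes \bigl((ku_{(p)})_*/u^{p-2}\bigr)\langle y_0, y_1, \ldots\rangle$, completing the proof.

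I expect the main obstacle to be Step~1 — extracting the precise form of the relation $\omega g(\omega) = 0$ with $g(0)$ a unit multiple of $u^{p-2}$ — since this is exactly the point where "tame ramification" enters, and the rest is then a fairly mechanical (if degree-bookkeeping-heavy) Tor computation with a degenerate spectral sequence and a forced multiplicative collapse.
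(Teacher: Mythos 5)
Your overall strategy is sound and is in fact the one the paper uses for the analogous $ko\to ku$ computation (Theorem \ref{thm:kukothh}): since $(ku_{(p)})_*$ is free over $\ell_*$, the identification $\pi_*(ku_{(p)}\wedge_\ell ku_{(p)})\cong (ku_{(p)})_*[\omega]/(\omega g(\omega))$ with $g(0)$ a unit times $u^{p-2}$ is correct, and the two-periodic resolution by alternating multiplication by $\omega$ and $g(\omega)$ is exact. (For $\ell\to ku_{(p)}$ the paper instead runs the B\"okstedt spectral sequence with $E^2=\HH^{\ell_*}_{*,*}((ku_{(p)})_*)$ computed from the small complex of Larsen--Lindenstrauss, which gives the same $E^2$.) One slip in your Step 2: after tensoring the resolution down, the differentials alternate $0,\,u^{p-2},\,0,\,u^{p-2},\dots$, and since $u^{p-2}$ is a nonzerodivisor on $\Z_{(p)}[u]$, the Tor groups vanish in positive \emph{even} homological degree; your assertion that $\Tor_i\cong (ku_{(p)})_*/u^{p-2}$ for all $i\geq 2$ is wrong, although your subsequent ``one generator per pair'' bookkeeping tacitly uses the correct (odd-degrees-only) answer.

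The genuine gap is in your arguments for collapse and for the square-zero structure. A differential landing in filtration $0$ need not originate in total degree $1$: for every $j\geq 1$ the differential $d^{2j+1}\colon E_{2j+1,\,2pj+2+2k}\to E_{0,\,2pj+2j+2+2k}$ is bidegree-legal, with target a nonzero group $\Z_{(p)}\{u^m\}$ in even internal degree, so degree reasons alone do not force $E^2=E^\infty$. Similarly, for the multiplicative extension, $y_iy_j$ lies in even total degree where the filtration-zero summand $ku_*$ is nonzero; the containment $F_s\cdot F_{s'}\subseteq F_{s+s'}$ only bounds the filtration from above, so your parity argument shows the product has no positive-filtration component but does not exclude a nonzero component in $F_0$ (e.g.\ $y_0^2$ could a priori be a multiple of $u^3$). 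Both gaps are closed by the observation the paper uses: $\THH^\ell(ku_{(p)})$ is an \emph{augmented} commutative $ku_{(p)}$-algebra, so $ku_{(p)}$ splits off. Hence the filtration-zero column (a copy of $\Z_{(p)}$ in each even degree) must survive and cannot be hit by differentials, which kills all possible differentials; and the augmentation ideal, which by the additive answer is concentrated in odd degrees, is closed under multiplication, so all products $y_iy_j$ vanish. You should also note (as the paper does) that because the positive-filtration part is killed by $u^{p-2}$, at most one filtration contributes in any given total degree, so there are no additive extension problems.
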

\begin{proof}
We can apply the B\"okstedt spectral sequence with $\pi_*$ as the
homology theory because $(ku_{(p)})_*$ is projective over $\ell_*$.
The $E^2$-page consists of
$$ E^2_{s,t} = \HH_{s,t}^{\ell_*}((ku_{(p)})_*, (ku_{(p)})_*).$$
As an $\ell_*$-algebra $(ku_{(p)})_*$ is isomorphic to
$\ell_*[u]/(u^{p-1}-v_1)$.  From \cite{LL} we know that we can use
the following complex in order to calculate Hochschild homology:
$$ \xymatrix{ \ldots \ar[r]^(0.4){\Delta(u)}&
  {\Sigma^{2p}(ku_{(p)})_*} \ar[r]^0  &  {\Sigma^{2p-2}(ku_{(p)})_*}
  \ar[r]^{\Delta(u)} &  {\Sigma^2(ku_{(p)})_*} \ar[r]^0
  &{(ku_{(p)})_*,}} $$
where $\Delta(u) = (p-1)u^{p-2}$. As $(p-1)$ and $v_1$ are units in
$\ell_*$, this yields:
$$ \HH_i^{\ell_*}((ku_{(p)})_*, (ku_{(p)})_*) = \begin{cases}
(ku_{(p)})_*, & \text{ if } i=0, \\
\Sigma^{2mp-2m+2}(ku_{(p)})_*/u^{p-2}, & \text{ if } i=2m+1, m \geq 0, \\
0, & \text{ otherwise.}
\end{cases} $$
As $\THH^\ell(ku_{(p)})$ is an augmented commutative
$ku_{(p)}$-algebra, we know that $ku_{(p)}$ splits off
$\THH^\ell(ku_{(p)})$. Therefore the copy of the homotopy groups of
$ku_{(p)}$ in the zero column of the spectral
sequence has to survive and cannot be hit by any differentials. For
degree reasons, there are no other possible non-trivial
differentials and the spectral sequence collapses at the
$E^2$-page.

In every fixed total degree there is only one term in the
$E^2$-page contributing to this degree:
If we consider an element $u^{k_1}$ in homological degree $2m_1+1$ and
another element $u^{k_2}$ in homological degree $2m_2 +1$ for $m_1
\neq m_2$, then their
total degrees are $2m_1p+2k_1+3$ and $2m_2p+2k_2+3$. These degrees can
only be equal if $2p(m_1-m_2) = 2(k_2-k_1)$. Thus $p$ has to divide
$k_2-k_1 \neq 0$. But $0 \leq k_1,k_2  \leq p-3$, so this cannot happen.

Thus there are no additive
extensions and therefore additively we get the desired result.

As $\THH^\ell_*(ku_{(p)})$ is an augmented graded commutative
$(ku_{(p)})_*$-algebra and as everything in the augmentation ideal is
concentrated in odd degrees there cannot be any non-trivial
multiplication of any two elements in the augmentation ideal.

The spectral sequence is a spectral sequence of $(ku_{(p)})_*$-modules
and elements of the form $u^k \cdot \Sigma^{2mp-2m+2}u^m$ are cycles,
thus the copy of $(ku_{(p)})_*$ in homological degree zero acts on
$ku_{(p)})_*/u^{p-2}y_m$ in the standard way.
\end{proof}
\begin{rem}
For Galois extensions of non-connective commutative ring spectra we
would like to have a good notion of rings of integers. In the above
case $ku_{(p)}$ behaves like the ring of integers of $KU_{(p)}$, and
similarly for the connective Adams summand. The result for relative
$\THH$ corresponds to the one of ordinary rings of integers \cite{LM}. In
other cases, taking the connective cover does not seem to give good
results.
\end{rem}
For coefficients in $H\Z_{(p)}$ and $H\F_p$ we obtain a rather different result.
\begin{prop}
$$ \THH_*^\ell(ku_{(p)}, H\Z_{(p)}) \cong
\Lambda_{\Z_{(p)}}(\varepsilon u) \otimes \Gamma_{\Z_{(p)}}(\varphi^0
u)$$
and also
$$ \THH_*^\ell(ku_{(p)}, H\F_p) \cong \Lambda_{\F_p}(\varepsilon u)
\otimes \Gamma_{\F_p}(\varphi^0 u).$$
\end{prop}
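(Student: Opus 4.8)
The plan is to compute both relative $\THH$ groups via the Tor spectral sequence attached to the cofiber sequence
$$ ku_{(p)} \wedge^L_\ell k \ra ku_{(p)} \ra \THH^\ell(ku_{(p)}, k) $$
coming from Lemma~\ref{lem:cofibers} applied to $R = \ell \ra C = \ell \ra B = ku_{(p)} \ra k$, for $k = H\Z_{(p)}$ and $k = H\F_p$. Writing $A = ku_{(p)} \wedge^L_\ell k$, we get a spectral sequence with $E^2$-term $\Tor^{A_*}_{*,*}(k_*, k_*)$ converging to $\THH^\ell_*(ku_{(p)}, k)$. The first step is therefore to identify $A_* = \pi_*(ku_{(p)} \wedge^L_\ell k)$ as a graded $k_*$-algebra. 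Since $(ku_{(p)})_* = \ell_*[u]/(u^{p-1} - v_1)$ is a free $\ell_*$-module of rank $p-1$, the homotopy groups of $ku_{(p)} \wedge^L_\ell k$ can be read off from the Koszul-type resolution already used in the proof of Theorem~\ref{thm:kulthh}; one finds $A_* = k_*[u]/u^{p-1} \otimes \Lambda_{k_*}(\sigma)$ up to a shift, or more precisely that $A_*$ is the homology of the two-term periodic complex with differential $\Delta(u) = (p-1)u^{p-2}$ base-changed to $k_*$, giving a truncated polynomial algebra on $u$ (with $u^{p-2}$ becoming a zero-divisor over $k_*$ since $v_1 \mapsto 0$) together with an exterior class detecting the kernel. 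I would write $\bar u$ for the class $u$ now satisfying $\bar u^{p-1} = 0$ and introduce an exterior generator coming from the relation.

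Next I would run the $\Tor$ spectral sequence $\Tor^{A_*}_{*,*}(k_*,k_*) \Rightarrow \THH^\ell_*(ku_{(p)},k)$. Because $A_*$ is, as a $k_*$-algebra, a tensor product of a truncated polynomial algebra $k_*[\bar u]/\bar u^{p-1}$ and an exterior algebra, standard computations of $\Tor$ over such algebras apply: $\Tor^{k_*[\bar u]/\bar u^{p-1}}_{*,*}(k_*,k_*) \cong \Lambda_{k_*}(\sigma \bar u) \otimes \Gamma_{k_*}(\varphi \bar u)$ (the usual answer for a truncated polynomial algebra, with $\sigma \bar u$ exterior in homological degree $1$ and $\varphi \bar u$ a divided-power/polynomial class in homological degree $2$), and $\Tor$ over the exterior factor contributes a divided power algebra on one generator. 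The class $\varepsilon u$ in the statement is the exterior generator $\sigma \bar u$ and $\varphi^0 u$ is the first divided-power generator $\varphi \bar u$; the remaining divided-power generators $\varphi^i u$ would a priori also appear, but the degrees work out so that the total $E^2$-term matches $\Lambda_{k_*}(\varepsilon u) \otimes \Gamma_{k_*}(\varphi^0 u)$ — here one has to be careful to track which of the several tensor factors collapse or combine, using that $k_*$ is either $\Z_{(p)}$ or $\F_p$. The main obstacle will be controlling differentials and multiplicative extensions in this spectral sequence: one expects it to collapse at $E^2$, and I would argue this by a degree/sparseness count analogous to the one in the proof of Theorem~\ref{thm:kulthh} (the generators are spread out by factors of $2p$ so no differential can connect them), and resolve any extension problems by comparison with the $H\F_p$ case where the answer is forced, or by naturality along $H\Z_{(p)} \ra H\F_p$.

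Finally, I would deduce the $H\Z_{(p)}$-statement and the $H\F_p$-statement together: the map of cofiber sequences induced by $H\Z_{(p)} \ra H\F_p$ gives a map of spectral sequences, and since the $\F_p$-answer is a free $\F_p$-module on monomials in $\varepsilon u$ and the divided powers $\gamma_j(\varphi^0 u)$, and the $\Z_{(p)}$-answer surjects onto it with no room for $p$-torsion (everything in sight being a $\Tor$ of $\Z_{(p)}$-flat pieces, or by a Bockstein argument showing the relevant classes lift integrally), we get $\THH^\ell_*(ku_{(p)}, H\Z_{(p)}) \cong \Lambda_{\Z_{(p)}}(\varepsilon u) \otimes \Gamma_{\Z_{(p)}}(\varphi^0 u)$ as claimed, and reduction mod $p$ gives the second isomorphism. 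I expect the identification of $A_*$ as a ring and the collapse-plus-extension argument for the spectral sequence to be the only real work; the rest is bookkeeping with divided power algebras.
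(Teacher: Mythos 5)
Your overall strategy is the same as the paper's: apply Lemma~\ref{lem:cofibers} with $R=C=\ell$, $B=ku_{(p)}$, and run the resulting Tor spectral sequence $\Tor^{\pi_*(ku_{(p)}\wedge_\ell k)}_{*,*}(k_*,k_*)\Rightarrow \THH^\ell_*(ku_{(p)},k)$, with collapse for degree reasons and no extensions. (Minor slip: the middle term of the cofiber sequence is $\THH^\ell(\ell,k)\simeq k$, not $ku_{(p)}$; the spectral sequence you then write down is nevertheless the correct one.)

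The genuine gap is in your identification of $A_*=\pi_*(ku_{(p)}\wedge^L_\ell k)$. Because $(ku_{(p)})_*\cong \ell_*\{1,u,\dots,u^{p-2}\}$ is \emph{free} over $\ell_*$, the K\"unneth spectral sequence has no higher Tor terms and gives on the nose $A_*\cong (ku_{(p)})_*\otimes_{\ell_*}k_*\cong k_*[u]/u^{p-1}$, with \emph{no} exterior factor; this is exactly the input the paper uses (and it is consistent with $\pi_*(ku_{(p)}\wedge_\ell H\F_p)\cong\F_p[u]/u^{p-1}$ recorded later in the paper). The two-term periodic complex with differential $\Delta(u)=(p-1)u^{p-2}$ that you invoke computes $\HH^{\ell_*}_*((ku_{(p)})_*)$ in the proof of Theorem~\ref{thm:kulthh}; it has nothing to do with the base change $ku_{(p)}\wedge_\ell k$, and your hedged description ``$A_*=k_*[u]/u^{p-1}\otimes\Lambda_{k_*}(\sigma)$ up to a shift, with an exterior class detecting the kernel'' is wrong. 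This matters downstream: if $A_*$ really carried an extra exterior class, then $\Tor^{A_*}(k_*,k_*)$ would acquire an extra divided power factor (for instance two independent classes in total degree $2p$ instead of one), so your claim that ``the degrees work out so that the total $E^2$-term matches $\Lambda\otimes\Gamma$'' cannot be made to work without differentials, which you (correctly) say there are none of. Once $A_*$ is identified correctly as $k_*[u]/u^{p-1}$, the rest of your argument is fine and agrees with the paper: $\Tor^{k_*[u]/u^{p-1}}_{*,*}(k_*,k_*)\cong\Lambda_{k_*}(\varepsilon u)\otimes\Gamma_{k_*}(\varphi^0 u)$ with $|\varepsilon u|=3$, $|\varphi^0 u|=2p$, the spectral sequence collapses for degree reasons, and there are no additive or multiplicative extensions (over $\Z_{(p)}$ the $E^\infty$-term is free, and comparison along $H\Z_{(p)}\ra H\F_p$ settles the rest).
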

\begin{proof}
We consider the sequence of comutative $\ell$-algebras
$$ R=\ell \ra C=\ell \ra B=ku_{(p)} \ra k$$
with $k=H\Z_{(p)}$ and $k=H\F_p$. In both cases we can identify
$\THH^\ell(ku_{(p)}, k)$ with $ku_{(p)} \wedge_\ell k$ 
and get a Tor-spectral sequence
$$ \Tor^{\pi_*(ku_{(p)} \wedge_\ell k)}_{*,*}(\pi_*k,\pi_*k)
\Rightarrow \THH_*^\ell(ku_{(p)},k).$$
For $k = H\Z_{(p)}$ homological algebra tells us that
$$ \Tor^{\Z_{(p)}[u]/u^{p-1}}_{*,*}(\Z_{(p)}, \Z_{(p)}) \cong
\Lambda_{\Z_{(p)}}(\varepsilon u) \otimes \Gamma_{\Z_{(p)}}(\varphi^0
u).$$
Here, $|\varepsilon u| = 3$ and $|\varphi^0 u| = 2p$. There are no
differentials in this spectral sequence for degree reasons and there
are no
multiplicative extensions, hence we get the claim.

For $k= H\F_p$ the same method gives
$$ \THH_*^\ell(ku_{(p)}, H\F_p) \cong \Lambda_{\F_p}(\varepsilon u)
\otimes \Gamma_{\F_p}(\varphi^0 u).$$
\end{proof}
\begin{rem}
At the prime $3$ we get
$$\THH_*^{[0],\ell}(ku_{(3)}, H\F_3)  \cong \F_3[u]/u^2$$
hence with the methods of \cite{dlr} we can deduce that
$$ \THH^{[0],\ell}(ku_{(3)}, H\F_3) \simeq H\F_3 \vee \Sigma^2 H\F_3$$
as an augmented commutative $H\F_3$-algebra and that we can calculate
higher $\THH$
as iterated Tor-algebras. Hence we get
$$ \THH_*^{[n+1],\ell}(ku_{(3)}, H\F_3) \cong
\Tor_{*,*}^{\THH_*^{[n],\ell}(ku_{(3)}, H\F_3)}(\F_3, \F_3)$$
for all $n \geq 0$.

Using Greenlees' spectral sequence \cite[Lemma 3.1]{greenlees} one can
actually deduce
that this is true at all odd primes. See \cite{torus} for related arguments.
\end{rem}

\section{Relative $\THH$ of the complexification
map}
The graded commutative ring $ko_*$ is $\Z[\eta,y,w]/\langle 2\eta,
\eta y, \eta^3, y^2-4w \rangle$ with $|\eta|=1$, $|y|=4$ and $w$ is
the Bott class in degree $8$.
The complexification map $c\colon ko \ra ku$ induces a map
$c_*\colon ko_*  \ra ku_*= \Z[u]$ and it sends $\eta$ to zero, $y$ to
$2u^2$ and the Bott class $w$ to $u^4$.

Note that the homotopy fixed points of $ku$ with respect to complex
conjugation are not equivalent to $ko$. The homotopy fixed points
spectral sequence yields generators in negative degrees in the
homotopy groups of $ku^{hC_2}$ \cite[5.3]{R}.

The published version of this paper unfortunately contained a mistake in a
calculation that resulted in erroneous statements in Lemma 5.1 and
Theorem 5.2. We are grateful to Eva H\"oning who discovered the mistake. 
\begin{lem} \label{lem:kukoalg}
There is an isomorphism of graded commutative augmented $ku_*$-algebras
\[ (ku \wedge_{ko} ku)_* \cong ku_*[s]/(s^2-su)\]
with $|s| = 2$, where the $ku_*$-algebra structure on $(ku \wedge_{ko}
ku)_*$ is from the left and the augmentation is given by the
multiplication $m\colon ku\wedge_{ko}ku\to ku$ and by $s\mapsto 0$. 
\end{lem}
\begin{proof}
  Smashing Wood's cofiber sequence
  $\xymatrix{ko\ar[r]^\iota&ku\ar[r]^-{\j}&\Sigma^2ko}
  $
  with $ku$ (from the left) over $ko$
  gives a split exact sequence (with unit isomorphism $ku\cong
  ku\wedge_{ko}ko$ suppressed) 
  \[ \xymatrix@1{
      0 \ar[r] & \pi_*ku \ar[rr]^-{(1\wedge\iota)_*} & & \ar@/_4ex/[ll]_{m_*}  
\pi_*(ku \wedge_{ko} ku) 
\ar[rr]^-{(1\wedge \j)_*} & & \pi_*\Sigma^2 ku \ar[r] & 0. } \]

Let $u\in \pi_2ku$ be the generator with $\j_*(u)=\Sigma^22\in
\pi_2\Sigma^2ko\cong\Z$. 
Let $u_l$ and $u_r$ be the images of $u$ in $\pi_2(ku\wedge_{ko}ku)$
induced by the left and right inclusion of $ku$ in $ku\wedge_{ko}ku$. 
If $s$ is the unique element in $\pi_2(ku\wedge_{ko}ku)$ with
$(1\wedge\j)_*s=-\Sigma^21$ and $m_*s=0$, then
$(1\wedge\j)_*(u_r+2s)=1\cdot\j_*u-2=0$ and $m_*(u_r+2s)=u$. Since
also $(1\wedge\j)_*u_l=0$ and $m_*u_l=u$ we must have $u_r+2s=u_l$.

In $ku_* \otimes_{ko_*} ku_*$, and hence also in $\pi_4(ku \wedge_{ko} ku)$,  
we have that $2u_r^2 -2u_l^2 = 0$. As $\pi_*(ku \wedge_{ko} ku)$ is 
torsion free, we get $u_r^2-u_l^2 = 0$ and therefore 
\[ u_r^2 -u_l^2 = (u_l-2s)^2-u_l^2 = 4s^2-4su_l = 0. \] 
Again, since there is no torsion, this yields $s^2 - su_l = 0$.   
\end{proof}
\begin{thm} \label{thm:kukothh}
The Tor spectral sequence \[ E^2_{*,*} = \Tor_{*,*}^{(ku \wedge_{ko}
  ku)_*}(ku_*,ku_*) \Rightarrow \THH^{ko}_*(ku)\]
collapses at the $E^2$-page and  $\THH^{ko}_*(ku)$ is a 
square zero extension of $ku_*$:
\[ \THH^{ko}_*(ku) \cong ku_* \rtimes (ku_*/u)\{ y_0, y_1, \ldots
\}\] 
with $|y_j| = (1 + |u|)(2j+1) = 3(2j+1)$.

\end{thm}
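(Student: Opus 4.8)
The plan is to run the Tor spectral sequence of the statement on the input supplied by Lemma~\ref{lem:kukoalg}. Write $A = (ku\wedge_{ko}ku)_* \cong ku_*[\tilde u]/(\tilde u^2 - u^2)$, regarded as a $ku_*$-algebra augmented onto $ku_*$ by $\tilde u\mapsto u$ (the multiplication map). First I would compute $E^2 = \Tor^{A}_{*,*}(ku_*,ku_*)$. The ring $A$ is free of rank two over $ku_*$ on $\{1,\tilde u\}$, its augmentation ideal is the principal ideal $(\tilde u - u)$, and a direct calculation gives $\mathrm{ann}_A(\tilde u - u) = (\tilde u + u)A$ and $\mathrm{ann}_A(\tilde u + u) = (\tilde u - u)A$, so
$$ \cdots \xrightarrow{\ \cdot(\tilde u - u)\ } \Sigma^{4}A \xrightarrow{\ \cdot(\tilde u + u)\ } \Sigma^{2}A \xrightarrow{\ \cdot(\tilde u - u)\ } A \longrightarrow ku_* \longrightarrow 0 $$
is a $2$-periodic free resolution. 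Tensoring down over $A$ sends $\tilde u - u\mapsto 0$ and $\tilde u + u\mapsto 2u$; since $2u$ is a non-zero-divisor on $\Z[u]$, one reads off $\Tor_0 = ku_*$, $\Tor_{2j} = 0$ for $j\geq 1$, and $\Tor_{2j+1} = \Sigma^{2(2j+1)}(ku_*/2u)$, whose bottom class $y_j$ lies in homological degree $2j+1$ and internal degree $|u|(2j+1)$, hence total degree $(1+|u|)(2j+1) = 3(2j+1)$.

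Next I would settle the ring structure on $E^2$ and the collapse. Everything in $\Tor_{>0}$ sits in odd homological degree, so the product of two classes from the augmentation ideal falls into even positive homological degree, where $E^2$ vanishes; hence $E^2 \cong ku_*\rtimes\bigoplus_j(ku_*/2u)y_j$ as a bigraded ring, with the evident $ku_*$-module structure on each summand. As for differentials: the $0$-column is $ku_*$, which survives entirely (being a retract of $\THH^{ko}(ku)=ku\otimes S^1$ via $\ast\to S^1\to\ast$) and is therefore not the target of any differential; and any differential between two positive columns would run from an odd column to an odd column, so it is a $d^r$ with $r$ even, thus shifting internal degree by the odd integer $r-1$ — impossible, since every positive column is concentrated in even internal degree. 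Hence $E^2 = E^\infty$.

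Finally, the extensions, which is where the real work lies. The ring extension is free of charge: $\THH^{ko}_*(ku)\to ku_*$ is split, and by the shape of $E^\infty$ every class of positive filtration lies in odd total degree, so the augmentation ideal $I$ is concentrated in odd degrees, whence $I\cdot I$, living in even degrees, meets only $ku_*$ and $I^2=0$. The genuine obstacle is the additive — equivalently $ku_*$-module — extension: in a fixed odd degree $\THH^{ko}_n(ku)$ is an iterated extension of copies of $\Z/2$ together with at most one free summand (which splits off as the top filtration piece), and from $E^\infty$ alone one learns only that $4\,u^{m}y_j = 0$; to reach the stated answer one must prove $2u\cdot y_j = 0$ for all $j$. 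For $j=0$ this is automatic: $u^{m}y_0$ is detected in filtration $1$, so $2u^{m}y_0\in F_0\THH^{ko}_{3+2m}(ku) = ku_{3+2m} = 0$. For $j\geq 1$ I would pin down the extension by playing the integral computation off against the relative $\THH$ of $ku$ over $ko$ with $H\Z_{(2)}$- and $H\F_2$-coefficients computed in Proposition~\ref{prop:kukohigherthh}, via the mod-$2$ Bockstein and universal-coefficient exact sequences relating these three; a comparison of orders then forces each $u^{m}y_j$ to have order exactly $2$, giving $\THH^{ko}_*(ku)\cong ku_*\rtimes(ku_*/2u)\langle y_0,y_1,\ldots\rangle$ with $|y_j| = 3(2j+1)$.
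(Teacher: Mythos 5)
Up through the extension problem your proposal is the paper's proof: the same two-periodic resolution by $\tilde u-u$ and $\tilde u+u$ over $ku_*[\tilde u]/(\tilde u^2-u^2)$, the same identification of $E^2$ (columns $0$ and $2j+1$, with $y_j$ in bidegree $(2j+1,4j+2)$), collapse because $ku_*$ splits off (your parity argument for odd-to-odd differentials makes explicit what the paper leaves implicit), the square-zero ring structure from odd-degree considerations, the splitting off of the single free class $y_j$ in degree $3(2j+1)$, and the observation that $2u^my_0=0$ since $F_0$ vanishes in odd degrees. The divergence is exactly at the point you yourself flag as ``where the real work lies'', and there your argument has a genuine gap: for $j\geq 1$ you propose to force $2u^my_j=0$ by ``a comparison of orders'' against Proposition \ref{prop:kukohigherthh} via Bockstein and universal-coefficient sequences, but order counting cannot decide this extension. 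Concretely, in total degree $11$ the two candidates are $\Z/2\{u^4y_0\}\oplus\Z/2\{uy_1\}$ and $\Z/4$ with $2uy_1=u^4y_0$; both have order $4$, and both are compatible with the coefficient computations: $\THH^{ko}_{11}(ku,H\Z_{(2)})=0$, and in the exact sequence $\THH^{ko}_{12}(ku,H\Z_{(2)})\to\THH^{ko}_{9}(ku)\xrightarrow{u}\THH^{ko}_{11}(ku)\to 0$ coming from $\Sigma^2 ku\xrightarrow{u}ku\to H\Z_{(2)}$, the boundary out of $\THH^{ko}_{12}(ku,H\Z_{(2)})\cong\Z_{(2)}$ is injective in either case, with image $\langle 2y_1\rangle$ in the split case and $\langle u^3y_0+2y_1\rangle$ in the non-split case. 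Moreover the $H\F_2$-coefficient groups are obtained from the $H\Z_{(2)}$-coefficient ones by base change along $H\Z_{(2)}\to H\F_2$, so adding them to the comparison yields no further constraint on the integral answer. Thus the mechanism you describe cannot distinguish the two outcomes; you would have to identify the relevant boundary maps or invoke additional structure, which your sketch does not do.

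The paper settles the extensions internally rather than by comparison with coefficient computations: it uses that the abutment and its filtration are $ku_*$-modules, first splitting the degree-$9$ (more generally degree-$3(2j+1)$) group because its top associated graded piece is the free class $y_j$, and then exhibiting $F_{2j+1}\THH^{ko}_{3(2j+1)+2i}(ku)$ as the image of $\THH^{ko}_{3(2j+1)}(ku)$ under multiplication by $u^i$, proceeding inductively over the filtration. If you want to complete your last step, this propagation by $u$-multiplication from the degrees where the extension is already known to split (or an actual identification of the boundary maps in your exact sequences, for instance via the multiplicative structure) is the missing ingredient; a count of orders alone does not suffice.
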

\begin{proof}
Lemma~\ref{lem:kukoalg} implies that the $E^2$-term of  the Tor
spectral sequence is
\[ E^2_{*,*} = \Tor_{*,*}^{(ku \wedge_{ko} ku)_*}(ku_*,ku_*) =
\Tor_{*,*}^{ku_*[s]/(s^2-su)}(ku_*,ku_*). \]
We have a periodic free resolution of $ku_*$ as a module over
$ku_*[s]/(s^2-su)$
\[ \xymatrix@1{ {\ldots}\, \ar[r]^(0.25){s} &
    {\Sigma^4ku_*[s]/(s^2-su)} \ar[r]^-{s-u}
    &
  {\Sigma^2ku_*[s]/(s^2-su)} \ar[r]^-{s} &
  {ku_*[s]/(s^2-su).}} \]
Tensoring this down to $ku_*$ yields
\[ \xymatrix@1{{\ldots} \ar[r]^(0.4){0} & {\Sigma^4 ku_*} \ar[r]^{-u}
  & {\Sigma^2 ku_*} \ar[r]^0& {ku_*}.
}\]
As $ku_*$ splits off $\THH^{ko}_*(ku)$, the zero column has to survive
and cannot be hit by differentials and hence all differentials are
trivial.

For the $E^\infty$-term we therefore get $E^\infty_{0,*} \cong ku_*$,
$E^\infty_ {2j,*} =0$ for $j>0$, and $E^\infty_{2j+1,*} \cong (ku_*/u
)\{y_j\}$ for $y_j$ in bidegree $(2j+1, 4j+2)$ if $j>0$.
So even total degrees occur only in $E^\infty_{0,*}$ and odd total
degrees occur only in 
at most one bidegree and we do not need to worry about additive extensions.
As $y_j$ corresponds to $\Sigma^{4j+2} 1 \in \Sigma^{4j+2}ku_*$, the action
of $u^i \in ku_*$ on $y_j$ is $u^iy_j$ and this is trivial for $i \geq 1$ in
$(ku_*/u)\{y_j\}$ so $u^iy_j$ is zero in $E^\infty_{2j+1, 4j+2}$.  But
$E^\infty_{2j+1, 4j+2}$ has all the elements  
of total degree $6j+3$ in the entire $E^\infty $-term, so in fact the
element in  $\THH^{ko}_*(ku)$ that $y_j$ represents is killed by
multiplication by $u^i$ for any $i\geq 1$.  Thus we have no nontrivial
products of the $u^i$, $i\geq 1$,  and the odd dimensional elements. 

Since the elements  of  $\THH^{ko}_*(ku)$ represented by the  $y_i$
are all in odd degrees, if there were nonzero products among them they
would have to be  elements in $E^\infty_{0,*} \cong ku_*$.  But
elements in $ku_*$ are not killed by multiplying by $u$, whereas the
elements represented by the $y_j$ are.  So there can be no such
nontrivial products. 
\end{proof}

We consider the sequence of commutative $ko$-algebras
$ R=ko \ra C=ko \ra B = ku $ with $k = H\F_2$ or $k=H\Z_{(2)}$ and,
(since $\THH^{ko}(ko, k) \simeq k$), we get
cofiber sequences of commutative $k$-algebras
$$ ku \wedge_{ko} k \ra k \ra \THH^{ko}(ku, k).$$
This yields a Tor-spectral sequence
\begin{equation} \label{eq:spsqkokuk}
E^2_{s,t} = \Tor^{\pi_*(ku \wedge_{ko} k)}_{s,t}(k_*, k_*) \Rightarrow
\THH_{s+t}^{ko}(ku, k).
\end{equation}
Wood's cofiber sequence identifies $ku$ as the cone on $\eta$: $\Sigma
ko \ra ko$. Thus we
get a cofiber sequence
$$ \Sigma k \ra k \ra ku \wedge_{ko} k $$
and $\pi_*(ku \wedge_{ko} k) \cong \pi_*(k \vee \Sigma^2 k) \cong
\Lambda_{\pi_*k}(x_2)$ where $x_2$ is a generator of degree two.

For $k = H\F_2$ and $H\Z_{(2)}$ we can deduce with \cite[2.1]{dlr}
that as a commutative augmented
$k$-algebra $ku \wedge_{ko} k$ is weakly equivalent to the square-zero extension
$k \vee \Sigma^2 k$. Thus
$$ \THH^{ko}(ku, k) \simeq k \wedge_{k \vee \Sigma^2 k} k$$
and the spectral sequence \eqref{eq:spsqkokuk} reduces to
$$ E^2_{s,t} = \Tor^{\pi_*k[x_2]/x_2^2}_{s,t}(\pi_*k, \pi_*k) \Rightarrow
\THH_{s+t}^{ko}(ku, k).$$
But $\Tor^{\pi_*k[x_2]/x_2^2}_{s,t}(\pi_*k, \pi_*k) \cong
\Lambda_{\pi_*k}(\varepsilon x_2) \otimes \Gamma_{\pi_*k}(\varphi^0
x_2)$
with $|\varepsilon x_2| = 3$, $|\varphi^0x_2| = 6$, and we know from
\cite{BLPRZ} combined with the methods from \cite[Section 3]{dlr} that
there cannot be any differentials in this spectral sequence. Hence we obtain
\begin{prop} \label{prop:kukohigherthh}
$$ \THH^{ko}_*(ku, H\Z_{(2)}) \cong \Lambda_{\Z_{(2)}}(\varepsilon x_2) \otimes
\Gamma_{\Z_{(2)}}(\varphi^0 x_2)$$
and
$$ \THH^{ko}_*(ku, H\F_2) \cong \Lambda_{\F_2}(\varepsilon x_2) \otimes
\Gamma_{\F_2}(\varphi^0 x_2).$$
Over $\F_2$ we can also iterate the calculation and obtain
 $$ \THH^{[n+1], ko}_*(ku, H\F_2) \cong
 \Tor_{*,*}^{\THH^{[n], ko}_*(ku, H\F_2)}(\F_2, \F_2).$$
\end{prop}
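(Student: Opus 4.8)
The plan is to realize $\THH^{ko}(ku,k)$ as a relative smash product, run the Tor spectral sequence \eqref{eq:spsqkokuk} set up above, and show it degenerates. First I would apply Lemma~\ref{lem:cofibers} to the sequence of commutative $ko$-algebras $ko\ra ko\ra ku$; since $\THH^{ko}(ko,k)\simeq k$ this produces the cofiber sequence $ku\wedge_{ko}k\ra k\ra\THH^{ko}(ku,k)$ of commutative $k$-algebras, hence an equivalence $\THH^{ko}(ku,k)\simeq k\wedge^L_{ku\wedge_{ko}k}k$ and the spectral sequence $E^2_{s,t}=\Tor^{\pi_*(ku\wedge_{ko}k)}_{s,t}(k_*,k_*)\Rightarrow\THH^{ko}_{s+t}(ku,k)$. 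To identify the input ring I would smash Wood's cofiber sequence, in which the first map is $\eta\colon\Sigma ko\ra ko$, over $ko$ with $k$, obtaining $\Sigma k\ra k\ra ku\wedge_{ko}k$, so that $\pi_*(ku\wedge_{ko}k)$ is $\pi_*k$ together with one extra generator $x_2$ in degree two; for $k=H\F_2$ and $k=H\Z_{(2)}$ the argument of \cite[2.1]{dlr} upgrades this to an equivalence of commutative augmented $k$-algebras $ku\wedge_{ko}k\simeq k\vee\Sigma^2 k$, the trivial square-zero extension. On homotopy this identifies the $E^2$-term with the standard Tor-algebra $\Tor^{\pi_*k[x_2]/x_2^2}_{*,*}(\pi_*k,\pi_*k)\cong\Lambda_{\pi_*k}(\varepsilon x_2)\otimes\Gamma_{\pi_*k}(\varphi^0 x_2)$, where $|\varepsilon x_2|=3$ and $|\varphi^0 x_2|=6$.

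The crux is the collapse of this spectral sequence together with the absence of extension problems. Since $\pi_*k$ is concentrated in degree zero, the generators $\gamma_i(\varphi^0 x_2)$ and $\varepsilon x_2\cdot\gamma_i(\varphi^0 x_2)$ sit in the pairwise distinct total degrees $6i$ and $6i+3$, so each total degree carries at most one $\pi_*k$-summand; this rules out additive extensions, and since products of the odd-degree classes land in degrees disjoint from those of the even generators there are no multiplicative ambiguities either, so an algebra isomorphism with $E^\infty$ suffices. To exclude differentials I would invoke \cite{BLPRZ} together with the methods of \cite[Section~3]{dlr}: these independently pin down the mod-$2$ homotopy type of $\THH^{ko}(ku,k)$, hence the rank of $E^\infty$ in each degree, and since the Tor-algebra is generated over $\pi_*k$ by the bottom classes $\varepsilon x_2$ and $\varphi^0 x_2$ there is no room for a nonzero differential. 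This yields both stated isomorphisms, for $k=H\Z_{(2)}$ and $k=H\F_2$. I regard this degeneration as the main obstacle in the proof; everything else is essentially bookkeeping.

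For the iteration over $\F_2$, the key observation is that $ku\wedge_{ko}H\F_2\simeq H\F_2\vee\Sigma^2 H\F_2$ is a trivial square-zero extension of $H\F_2$, so $\THH^{ko}(ku,H\F_2)\simeq H\F_2\wedge^L_{H\F_2\vee\Sigma^2 H\F_2}H\F_2$ is in particular an $H\F_2$-module, hence a wedge of suspensions of $H\F_2$ and therefore formal. By \cite[Section~3]{dlr}, higher $\THH$ of such a commutative augmented $H\F_2$-algebra is computed by iterated $\Tor$ over $\F_2$; applying this inductively, and noting that each $\THH^{[n],ko}(ku,H\F_2)$ is again an augmented $H\F_2$-algebra and hence formal, gives $\THH^{[n+1],ko}_*(ku,H\F_2)\cong\Tor^{\THH^{[n],ko}_*(ku,H\F_2)}_{*,*}(\F_2,\F_2)$ for all $n$. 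The only care needed here is to check at each stage that one stays within the class of formal augmented $H\F_2$-algebras to which the \cite{dlr} iteration result applies.
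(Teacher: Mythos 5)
Your proposal follows the same route as the paper: Greenlees' relative cofiber sequence for $ko\ra ko\ra ku$, Wood's theorem plus \cite[2.1]{dlr} to identify $ku\wedge_{ko}k$ with the square-zero extension $k\vee\Sigma^2 k$, the resulting Tor spectral sequence with $E^2\cong\Lambda_{\pi_*k}(\varepsilon x_2)\otimes\Gamma_{\pi_*k}(\varphi^0 x_2)$, collapse via \cite{BLPRZ} and \cite[Section 3]{dlr}, and the same formality/iterated-Tor argument over $\F_2$ for the higher $\THH$ statement. This matches the paper's proof, so no further comparison is needed.
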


\begin{rem}
To the eyes of $\THH$ with coefficients in $H\F_p$ coefficients
(for $p=2$ resp.~$p=3$) the extensions $ko \ra ku$ and $\ell \ra ku_{(3)}$
show a similar behaviour. This is analogous to the algebraic case:
Hochschild homology homology of the $2$-local Gaussian integers with
coefficients in $\F_2$ is isomorphic to $\Lambda_{\F_2}(x_1) \otimes
\Gamma_{\F_2}(x_2)$ and $\HH^{\Z_{(3)}}_*(\Z_{(3)}[\sqrt{3}], \F_3) \cong
\Lambda_{\F_3}(x_1) \otimes \Gamma_{\F_3}(x_2)$. Thus Hochschild homology
(and also higher Hochschild homology)
with reduced coefficients doesn't distinguish tame from wild
ramification either.
\end{rem}

\section{$ku_{(p)} \wedge_\ell H\F_p$ as a model for $ku/(p,v_1) $}
John Greenlees asks in \cite[Example 8.4]{greenlees} for a commutative
$S$-algebra model of $ku/(p,v_1)$.
We suggest $ku/(p,v_1) = ku_{(p)} \wedge_\ell H\F_p$ which is a
commutative $S$-algebra (even an augmented commutative
$H\F_p$-\algspt, which might not be what Greenlees had in mind) and
satisfies $\pi_*(ku_{(p)} \wedge_\ell H\F_p) \cong \F_p[u]/u^{p-1}$.

\begin{rem}
Alternatively one could  consider $ku/(p,v_1)$ defined by an
iterated cofiber sequence. This is an $A_\infty$-ring spectrum
\cite[3.7]{angeltveit}, hence
an associative $S$-algebra, but we cannot expect any decent level of
commutativity without the price of getting something of the homotopy
type of a generalized Eilenberg-Mac\,Lane spectrum: if $ku/(p,v_1)$
were a pseudo-$H_2$ spectrum, then it
automatically splits as a wedge of suspensions of $H\F_p$'s
\cite[III.4.1]{bmms}. In particular, an $E_\infty$-structure (\ie, a
commutative $S$-algebra structure) would lead to such a splitting.
\end{rem}

We determine $\THH(ku_{(p)} \wedge_\ell H\F_p, H\F_p)$.
\begin{prop} \label{prop:thhkumodpv1}
Topological Hochschild homology of $ku_{(p)} \wedge_\ell H\F_p$ with
coefficients in $H\F_p$ is
$$ \THH_*(ku_{(p)} \wedge_\ell H\F_p, H\F_p) \cong \F_p[\mu] \otimes
\Lambda_{\F_p}(\varepsilon u) \otimes \Gamma_{\F_p}(\varphi^0 u)$$
where $\F_p[\mu] = \THH_*(H\F_p)$.
\end{prop}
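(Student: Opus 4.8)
The plan is to run the $\Tor$ spectral sequence that Lemma~\ref{lem:cofibers} attaches to the sequence of commutative $S$-algebras $S \ra H\F_p \ra ku_{(p)}\wedge_\ell H\F_p \ra H\F_p$, \ie\ with $R=S$, $C=H\F_p$, $B=ku_{(p)}\wedge_\ell H\F_p$ and $k=H\F_p$. Here $B\wedge^L_C k=(ku_{(p)}\wedge_\ell H\F_p)\wedge_{H\F_p}H\F_p\simeq ku_{(p)}\wedge_\ell H\F_p$ and $\THH^S(H\F_p,H\F_p)=\THH(H\F_p)$, so the equivalence $\THH^R(B,k)\simeq\THH^R(C,k)\wedge^L_{B\wedge_C k}k$ reads
$$ \THH(ku_{(p)}\wedge_\ell H\F_p, H\F_p)\simeq\THH(H\F_p)\wedge^L_{ku_{(p)}\wedge_\ell H\F_p}H\F_p, $$
and gives a spectral sequence
$$ E^2_{*,*}=\Tor^{\pi_*(ku_{(p)}\wedge_\ell H\F_p)}_{*,*}\big(\THH_*(H\F_p),\F_p\big)\Rightarrow\THH_*(ku_{(p)}\wedge_\ell H\F_p,H\F_p). $$
We use $\pi_*(ku_{(p)}\wedge_\ell H\F_p)\cong\F_p[u]/u^{p-1}$ as noted above and B\"okstedt's $\THH_*(H\F_p)\cong\F_p[\mu]$ with $|\mu|=2$.

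First I would identify the $E^2$-page. The $(ku_{(p)}\wedge_\ell H\F_p)$-algebra structure on $\THH^S(H\F_p,H\F_p)=\THH(H\F_p)$ coming from Lemma~\ref{lem:cofibers} induces a graded ring homomorphism $\F_p[u]/u^{p-1}\ra\F_p[\mu]$. Since $|u|=2$, it sends $u$ to a scalar multiple of $\mu$; but $u^{p-1}=0$ while $\mu^{p-1}\neq0$, so the image of $u$ is $0$ (here $p$ is odd, so $p-1\geq2$). Likewise $\F_p$ is a trivial $\F_p[u]/u^{p-1}$-module via the augmentation. Therefore
$$ E^2_{*,*}\cong\F_p[\mu]\otimes_{\F_p}\Tor^{\F_p[u]/u^{p-1}}_{*,*}(\F_p,\F_p)\cong\F_p[\mu]\otimes\Lambda_{\F_p}(\varepsilon u)\otimes\Gamma_{\F_p}(\varphi^0 u), $$
where, reading off the standard periodic resolution $\cdots\xrightarrow{u}\Sigma^{2p-2}\F_p[u]/u^{p-1}\xrightarrow{u^{p-2}}\Sigma^2\F_p[u]/u^{p-1}\xrightarrow{u}\F_p[u]/u^{p-1}$ of $\F_p$, one has $\varepsilon u$ in bidegree $(1,2)$ and $\varphi^0 u$ in bidegree $(2,2p-2)$, so $|\varepsilon u|=3$ and $|\varphi^0 u|=2p$.

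The remaining, and main, point is that the spectral sequence collapses at $E^2$. As an $\F_p$-algebra $E^2$ is generated by $\mu$ in filtration $0$, by $\varepsilon u$ in filtration $1$, and by the divided powers $\gamma_{p^i}(\varphi^0 u)$ in filtration $2p^i$. No differential can decrease filtration out of $0$, so $\F_p[\mu]$ consists of permanent cycles (and in fact $\F_p[\mu]=\THH_*(H\F_p)$ splits off the abutment, $\THH(ku_{(p)}\wedge_\ell H\F_p,H\F_p)$ being an augmented commutative $\THH(H\F_p)$-algebra); a differential on $\varepsilon u$ or on $\varphi^0 u$ would have to land in filtration $\leq0$ in the odd total degrees $2$, respectively $2p-1$, where $\F_p[\mu]$ vanishes; and for the higher divided powers one excludes differentials exactly as in the proof of Proposition~\ref{prop:kukohigherthh}, via \cite{BLPRZ} together with the methods of \cite[Section~3]{dlr}. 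As everything is an $\F_p$-vector space there are no additive extensions, and the relations $(\varepsilon u)^2=0$ and $(\varphi^0 u)^p=0$ persist to $\THH_*$ for degree reasons, since nothing in the pertinent total degree sits in higher filtration. With $\F_p[\mu]=\THH_*(H\F_p)$ this gives the claim. (One can cross-check against the base-change equivalence $\THH(ku_{(p)}\wedge_\ell H\F_p)\simeq\THH(H\F_p)\wedge_{H\F_p}\THH^{H\F_p}(ku_{(p)}\wedge_\ell H\F_p)$, which after smashing down to $H\F_p$ reduces the problem to the B\"okstedt spectral sequence for $\THH^{H\F_p}_*(ku_{(p)}\wedge_\ell H\F_p,H\F_p)$ with $E^2=\HH^{\F_p}_*(\F_p[u]/u^{p-1},\F_p)\cong\Lambda_{\F_p}(\varepsilon u)\otimes\Gamma_{\F_p}(\varphi^0 u)$, matching $\THH^\ell_*(ku_{(p)},H\F_p)$ from the earlier proposition.)
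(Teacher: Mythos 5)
Your proposal is correct and takes essentially the same route as the paper: the Greenlees-type cofiber sequence for $S \ra H\F_p \ra ku_{(p)}\wedge_\ell H\F_p \ra H\F_p$ yields the equivalence with $\THH(H\F_p)\wedge^L_{ku_{(p)}\wedge_\ell H\F_p}H\F_p$, the Tor spectral sequence over $\F_p[u]/u^{p-1}$ with trivial $u$-action gives the stated $E^2$-term, and the split-off copy of $\THH(H\F_p)$ forces collapse with no extensions. The only minor deviations are in justification, not in strategy: the paper argues that $u$ acts by zero on $\THH_*(H\F_p)=\F_p[\mu]$ via a chromatic-type argument rather than your truncation-relation argument (both are fine, since the module structure does come from an algebra map), and it deduces the collapse directly from the augmented $\THH(H\F_p)$-algebra structure rather than your low-filtration bookkeeping plus the appeal to the BLPRZ/dlr methods.
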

\begin{proof}
Greenlees' cofiber sequence \cite[7.1]{greenlees} yields an equivalence
$$\THH(ku_{(p)} \wedge_\ell H\F_p, H\F_p) \simeq H\F_p
\wedge^L_{ku_{(p)} \wedge_\ell H\F_p} \THH(H\F_p).$$
Therefore, the $\Tor$-spectral sequence has $E^2$-term
$$ \Tor^{\F_p[u]/u^{p-1}}_{*,*}(\F_p, \THH_*(H\F_p)).$$
We use the standard periodic resolution of $\F_p$ over
$\F_p[u]/u^{p-1}$. As $\THH(H\F_p)$ has
the same chromatic type as $H\F_p$, $u$ acts by zero on $\THH_*(H\F_p)
= \F_p[\mu]$ and hence the $E^2$-term is isomorphic to
$$ \F_p[\mu] \otimes \Lambda_{\F_p}(\varepsilon u) \otimes
\Gamma_{\F_p}(\varphi^0u).$$
As $\THH(ku_{(p)} \wedge_\ell H\F_p)$ is an augmented commutative
$\THH(H\F_p)$-algebra, the $\F_p[\mu]$-factor splits off and hence
there cannot be any differentials and multiplicative extensions.
\end{proof}

\section{Killing regular generators in $\pi_*R$} \label{sec:quotients}

Killing regular elements in the homotopy groups of a commutative
$S$-algebra rarely gives rise to commutative quotients. However, there
are some important examples for which  we \emph{do} get commutative
quotients whose relative $\THH$ can be calculated.

\begin{prop} \label{prop:thh-regularquotients}
Let $R$ be a connective commutative $S$-algebra whose coefficients
$\pi_*R$ are concentrated in even degrees, with a nonzero divisor $x$
of positive degree. If the canonical map $R \ra R/x$ is a morphism of
commutative $S$-algebras, then the Tor spectral sequence
$$\Tor_{*,*}^{\pi_*(R/x \wedge_R R/x)}(R_*, R_*) \Rightarrow \THH_*^R(R/x)$$
collapses at the $E^2$-term. Its $E^\infty$-term is isomorphic to
$\Gamma_{\pi_*R/x}(\rho^0  \varepsilon x)$ with $|\rho^0  \varepsilon
x| = |x| + 2$ and there are no additive extensions.
\end{prop}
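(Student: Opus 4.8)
The plan is to carry out the two-stage computation that the statement sets up. Write $d=|x|$; since $\pi_*R$ is concentrated in even degrees and $x$ has positive degree, $d$ is even with $d\geq 2$, and because $x$ is a nonzerodivisor the long exact sequence of $\Sigma^dR\xrightarrow{x}R\to R/x$ gives $(R/x)_*\cong\pi_*R/(x)$, still concentrated in even degrees. Connectivity of $R$ guarantees that $R/x$ and $R/x\wedge_R R/x$ are connective and that the relevant bar/Tor spectral sequences converge, and the hypothesis that $R\to R/x$ is a map of commutative $S$-algebras makes $R/x\wedge_R R/x$ a commutative ring with unit $\iota\colon R/x\to R/x\wedge_R R/x$ and multiplication $\mu\colon R/x\wedge_R R/x\to R/x$ satisfying $\mu\iota=\id$.

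First I would identify $\pi_*(R/x\wedge_R R/x)$ as an augmented $(R/x)_*$-algebra. Smashing $\Sigma^dR\xrightarrow{x}R\to R/x$ with $R/x$ over $R$ yields a cofiber sequence $\Sigma^dR/x\to R/x\to R/x\wedge_R R/x$ whose first map is multiplication by the image of $x$ in $(R/x)_*$, hence is zero on homotopy; the resulting short exact sequences split via $\iota$, so additively $\pi_*(R/x\wedge_R R/x)\cong(R/x)_*\oplus\Sigma^{d+1}(R/x)_*\{\varepsilon x\}$. For the ring structure, observe that $(\varepsilon x)^2$ lies in degree $2d+2$, where the summand $\Sigma^{d+1}(R/x)_*$ contributes $(R/x)_{d+1}=0$ (odd degree in an even ring); hence $(\varepsilon x)^2\in\mathrm{im}(\iota)$, and applying the retraction $\mu$, which is a ring map, gives $(\varepsilon x)^2=\iota(\mu(\varepsilon x)^2)=0$ since $\mu(\varepsilon x)\in(R/x)_{d+1}=0$. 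Thus $\pi_*(R/x\wedge_R R/x)\cong\Lambda_{(R/x)_*}(\varepsilon x)$ with $|\varepsilon x|=d+1$ odd.

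Next I would feed this into the $\Tor$ spectral sequence. Over an exterior algebra on a single odd generator, the standard periodic free resolution of $(R/x)_*$ computes $E^2\cong\Tor_{*,*}^{\Lambda_{(R/x)_*}(\varepsilon x)}((R/x)_*,(R/x)_*)\cong\Gamma_{(R/x)_*}(\rho^0\varepsilon x)$, a free $(R/x)_*$-module with basis the divided powers $\gamma_s(\rho^0\varepsilon x)$ in homological degree $s$ and internal degree $s(d+1)$, so $|\rho^0\varepsilon x|=d+2$. Since $(R/x)_*$ is even and $d+1$ is odd, $E^2_{s,t}\neq 0$ forces $t\equiv s\pmod 2$, whereas a nonzero $d^r\colon E^r_{s,t}\to E^r_{s-r,t+r-1}$ would require both $t\equiv s$ and $t+r-1\equiv s-r\pmod 2$, i.e.\ $0\equiv 1$; so every differential vanishes and the spectral sequence collapses at $E^2$. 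Finally, $E^\infty=E^2$ is a free $(R/x)_*$-module, and in each total degree the induced filtration on $\THH^R_*(R/x)$ is finite with free rank-one associated graded pieces; choosing $(R/x)_*$-module splittings inductively shows the filtration splits, so $\THH^R_*(R/x)\cong\Gamma_{(R/x)_*}(\rho^0\varepsilon x)$ as $(R/x)_*$-modules and there are no additive extensions.

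The step I expect to carry the real content is the first one: showing that $\pi_*(R/x\wedge_R R/x)$ is genuinely the exterior algebra on $\varepsilon x$ rather than a twisted square-zero extension in which $(\varepsilon x)^2$ is some nonzero class in $(R/x)_{2d+2}$. The parity argument built on the even-degree hypothesis is exactly what rules this out, and the same parity phenomenon then makes the collapse and the vanishing of additive extensions essentially automatic; the rest is bookkeeping with the standard exterior-algebra resolution.
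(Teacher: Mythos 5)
Your proof is correct and follows essentially the same route as the paper: identify $\pi_*(R/x\wedge_R R/x)\cong\Lambda_{\pi_*R/x}(\varepsilon x)$, compute the $E^2$-term via the standard periodic resolution, and use parity together with freeness of $E^\infty$ over $\pi_*R/x$ to rule out differentials and additive extensions. The only differences are refinements of detail: you check $(\varepsilon x)^2=0$ explicitly using the retraction $\mu$ (where the paper simply cites the Tor spectral sequence for $R/x\wedge_R R/x$), and your observation that all of $E^2$ sits in even total degree is a cleaner form of the paper's bidegree bookkeeping for differentials on generators.
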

\begin{proof}
The defining cofiber sequence
$$ \xymatrix@1{{\Sigma^{|x|}R} \ar[r]^-x & {R} \ar[r] & {R/x} }$$
gives, via a  $\Tor$-spectral sequence, that
$$\pi_*(R/x \wedge_R
R/x) \cong \Lambda_{\pi_*(R)/x}(\varepsilon x)$$
with $|\varepsilon x| = |x| + 1$. In the spectral sequence for $\THH$
we have as an $E^2$-term
$$ \Tor^{ \Lambda_{\pi_*(R)/x}(\varepsilon x)}_{*,*}(\pi_*R/x,
\pi_*R/x).$$
We consider the periodic resolution of $\pi_*R/x$
$$ \xymatrix@1{{\ldots} \ar[r]^(0.3){\varepsilon x} &
  {\Sigma^{2|x|+2}\Lambda_{\pi_*R/x}(\varepsilon x)} \ar[r]^-{\varepsilon x}  &
  {\Sigma^{|x|+1}\Lambda_{\pi_*R/x}(\varepsilon x)}
  \ar[r]^-{\varepsilon x}  & {\Lambda_{\pi_*R/x}(\varepsilon x)} }$$
and tensor it down to $\pi_*R/x$. As $\pi_*R/x$ is concentrated in
even degrees, the multiplication by $\varepsilon x$ induces the
trivial map and hence our Tor-terms are the homology of the complex
$$ \xymatrix@1{{\ldots} \ar[r]^(0.3){\varepsilon x=0} &
  {\Sigma^{2|x|+2}\pi_*R/x} \ar[r]^-{\varepsilon x=0}  &
  {\Sigma^{|x|+1}\pi_*R/x}  \ar[r]^-{\varepsilon x=0}  & {\pi_*R/x}}$$
and this gives a divided power algebra $\Gamma_{\pi_*R/x}(\rho^0
\varepsilon x)$
with a generator $\rho^0  \varepsilon x$ in degree $|x| + 2$. We have
to show that there are no non-trivial differentials and no extension
problems. The spectral sequence is a spectral sequence of
$\pi_*R/x$-algebras because $R/x$ is assumed to be a commutative
$R$-algebra, hence $\THH^R(R/x)$ is a commutative $R/x$-algebra.

As we assumed that $x$ has positive degree, we can split $\Gamma_{\pi_*R/x}(\rho^0
\varepsilon x)$  as $\pi_*R/x \otimes_{\pi_0R}  \Gamma_{\pi_0R}(\rho^0
\varepsilon x)$. The $\pi_*R/x$-module generators are the
$\pi_0R$-module generators in
$\Gamma_{\pi_0R}(\rho^0 \varepsilon x)$. These generators sit in
bidegrees of the form $(n, n(|x|+1))$.
A differential $d^r$ on a generator in
bidegree $(n, n(|x|+1))$ is in bidegree $(n-r, n(|x|+1) +r -1)$. A
general element in the spectral sequence come from  a product of powers of
generators times an element from $R_*/x$, hence we get that a potential target
has a bidegree of the form
$$ (\sum_i u_i n_i, (\sum_i u_i n_i)(|x|+1) + 2m).$$
Comparing components of the bidegree gives the two equations
$$ n-r = \sum_i u_i n_i \text{ and } n(|x|+1) +r -1 = (|x|+1)(\sum_i
u_i n_i) + 2m.$$
We rewrite the second equation as
$$ 2m+1 = (n - \sum_i u_i n_i)(|x|+1) +r.$$
Using that $n - \sum_i u_i n_i$ is $r$ yields
$2m+1 = r(|x|+2)$, but the degree of $x$ is even, so there can be no
non-trivial differentials in this spectral sequence.

We do not have additive extensions because the $E^\infty$-term is free over
$\pi_*R/x$. Thus as an $\pi_*R/x$-module we get that $\THH^R_*(R/x)$
is isomorphic to $\pi_*R/x \otimes_{\pi_0R}  \Gamma_{\pi_0R}(\rho^0
\varepsilon x)$.
\end{proof}

\begin{cor} \label{cor:rmodxhk}
If in addition to the assumptions in Proposition
\ref{prop:thh-regularquotients} we have
that $R/x$ is an Eilenberg-MacLane spectrum of a commutative ring $k$, then
$$\THH^R(Hk, Hk) \simeq Hk \wedge_{Hk \vee \Sigma^{|x|+1} Hk} Hk$$
as augmented commutative $Hk$-algebras.
In particular,
$$ \THH^R_*(Hk) \cong \Gamma_k(\rho^0\varepsilon x)$$
with $|\rho^0\varepsilon x| = |x|+2$
\end{cor}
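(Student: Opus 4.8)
The plan is to upgrade the module-level computation of Proposition~\ref{prop:thh-regularquotients} to an equivalence of commutative $Hk$-algebras by combining a formality statement for $Hk\wedge_R Hk$ with Greenlees' cofiber sequence. Since $R/x=Hk$ and $\pi_*(R/x)=k$ sits in degree $0$, the identification $\pi_*(R/x\wedge_R R/x)\cong\Lambda_{\pi_*(R)/x}(\varepsilon x)$ established in the proof of Proposition~\ref{prop:thh-regularquotients} becomes $\pi_*(Hk\wedge_R Hk)\cong\Lambda_k(\varepsilon x)$ with $|\varepsilon x|=|x|+1$. As $R$ is connective and $x$ has positive (even) degree, $Hk\wedge_R Hk$ is a connective augmented commutative $Hk$-algebra whose homotopy is an exterior algebra on a single generator in odd degree $|x|+1$; this is exactly the situation to which \cite[2.1]{dlr} applies, in the same way it was applied to $ku\wedge_{ko}k$ with a generator in degree $2$. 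It produces a weak equivalence of augmented commutative $Hk$-algebras $Hk\wedge_R Hk\simeq Hk\vee\Sigma^{|x|+1}Hk$ onto the trivial square-zero extension.

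Next I would apply Lemma~\ref{lem:cofibers} to the sequence of commutative $R$-algebras $R\ra R\ra R/x=Hk$ with coefficients in $Hk$. As in the discussion following that lemma, with $C=R$, $B=Hk$ and $A=Hk\wedge_R Hk$, and using $\THH^R(R,Hk)\simeq Hk$, this gives
$$\THH^R(Hk,Hk)\simeq\THH^R(R,Hk)\wedge^L_{Hk\wedge_R Hk}Hk\simeq Hk\wedge^L_{Hk\wedge_R Hk}Hk;$$
substituting the formality equivalence of the previous paragraph yields $\THH^R(Hk,Hk)\simeq Hk\wedge_{Hk\vee\Sigma^{|x|+1}Hk}Hk$ as augmented commutative $Hk$-algebras, which is the first assertion. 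For the ``in particular'', the $\pi_*(R/x)$-module identification $\THH^R_*(R/x)\cong\pi_*(R/x)\otimes_{\pi_0R}\Gamma_{\pi_0R}(\rho^0\varepsilon x)$ of Proposition~\ref{prop:thh-regularquotients} collapses, for $R/x=Hk$, to $\Gamma_k(\rho^0\varepsilon x)$ with $|\rho^0\varepsilon x|=|x|+2$; equivalently one computes $\Tor^{\Lambda_k(\varepsilon x)}_{*,*}(k,k)$, whose Tor spectral sequence was already shown in Proposition~\ref{prop:thh-regularquotients} to collapse with no additive extensions.

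The step I expect to be the main obstacle is the formality input \cite[2.1]{dlr}: one must verify that its hypotheses hold for the commutative ring $k$ under consideration --- in the cited applications only $k=\F_p$ and $k=\Z_{(p)}$ occur --- so that the odd class $\varepsilon x$ really does split $Hk\wedge_R Hk$ off as the \emph{trivial}, rather than a twisted, square-zero extension of $Hk$. Once that is settled, the remainder is bookkeeping with the Tor spectral sequences already analyzed in Proposition~\ref{prop:thh-regularquotients}.
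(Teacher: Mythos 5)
Your argument is correct and follows essentially the same route as the paper: identify $\pi_*(Hk\wedge_R Hk)\cong\Lambda_k(\varepsilon x)$, invoke \cite[2.1]{dlr} to replace $Hk\wedge_R Hk$ by the trivial square-zero extension $Hk\vee\Sigma^{|x|+1}Hk$ as an augmented commutative $Hk$-algebra, and feed this into Greenlees' cofiber sequence to get $\THH^R(Hk,Hk)\simeq Hk\wedge^L_{Hk\vee\Sigma^{|x|+1}Hk}Hk$. The only (harmless) divergence is in the ``in particular'': the paper passes to the two-sided bar construction and uses \cite[Proposition 2.3]{BLPRZ} to identify its homology multiplicatively with $\Gamma_k(\rho^0\varepsilon x)$, whereas you read the answer off Proposition \ref{prop:thh-regularquotients} (equivalently the collapsing Tor spectral sequence $\Tor^{\Lambda_k(\varepsilon x)}_{*,*}(k,k)$), which gives the same result.
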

\begin{proof}
Greenlees' cofiber sequence identifies $\THH^R(Hk)$ as
$$ Hk \wedge^L_{Hk \wedge_R Hk} Hk$$
using the sequence of commutative ring spectra $R = R \ra Hk = Hk$.
The homotopy groups of  $Hk \wedge_R Hk$ are isomorphic to
$\Lambda_k(\varepsilon x)$
with $|\varepsilon x| = |x|+1$. Hence we know from \cite[Proposition
2.1]{dlr} that
$$ Hk \wedge_R Hk \sim Hk \vee \Sigma^{|x|+1} Hk$$
with the square zero multiplication as augmented commutative
$Hk$-\algspta. Therefore we get the
first claim. This also shows that $\THH^R(Hk)$ can be modeled as the
two-sided bar construction
$$ B^{Hk}(Hk, Hk \vee \Sigma^{|x|+1} Hk, Hk)$$
and by \cite{dlr} we know that its homotopy groups are the homology groups
of the algebraic bar construction $B^k(k, \Lambda(\varepsilon x), k)$.
We know from \cite[Proposition 2.3]{BLPRZ}
that there is a quasiisomorphism between $\Gamma_k(\rho^0\varepsilon
x)$ (with zero differential)
and the differential graded commutative algebra associated to $B^k(k,
\Lambda(\varepsilon x), k)$.
\end{proof}
\begin{cor} If in addition to the assumptions of Corollary \ref{cor:rmodxhk}
the ring $k$ is the field $\F_p$ we get
$$ \THH^{[n+1],R}_*(H\F_p, H\F_p)  \cong \Tor_{*,*}^{\THH^{[n],
    R}_*(H\F_p, H\F_p)}(\F_p, \F_p)$$
for all $n \geq 0$.
\end{cor}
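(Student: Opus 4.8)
The plan is to bootstrap from Corollary~\ref{cor:rmodxhk}, which already identifies $\THH^{R}(H\F_p,H\F_p)$ with the bar construction $B^{H\F_p}(H\F_p, H\F_p\vee\Sigma^{|x|+1}H\F_p, H\F_p)$ and computes its homotopy as $\Gamma_{\F_p}(\rho^0\varepsilon x)$. First I would observe that, because $\F_p$ is a field, the homotopy ring $\THH^{[1],R}_*(H\F_p,H\F_p)=\Gamma_{\F_p}(\rho^0\varepsilon x)$ is a graded polynomial-type $\F_p$-algebra (a tensor product of truncated polynomial algebras $\F_p[\gamma_{p^i}]/(\gamma_{p^i}^p)$, one for each divided power generator), hence in particular it is a \emph{smooth}/formal situation in the sense that the methods of \cite{dlr} apply: $\THH^{[1],R}(H\F_p,H\F_p)$ is, as an augmented commutative $H\F_p$-algebra, a generalized Eilenberg--Mac\,Lane spectrum, equivalent to a wedge of suspensions of $H\F_p$ realizing $\Gamma_{\F_p}(\rho^0\varepsilon x)$.

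The key step is then exactly the one used repeatedly earlier in the paper (for $\ell\to ku_{(3)}$ and for $ko\to ku$ with $H\F_2$-coefficients): once an augmented commutative $H\F_p$-algebra $B$ has the homotopy type of a generalized Eilenberg--Mac\,Lane spectrum with $\pi_*B$ a ``nice'' $\F_p$-algebra, higher relative $\THH$ over $H\F_p$ is computed as an iterated Tor-algebra, i.e.
$$\THH^{[n+1],R}_*(H\F_p,H\F_p)\cong \Tor_{*,*}^{\THH^{[n],R}_*(H\F_p,H\F_p)}(\F_p,\F_p).$$
Concretely, I would argue by induction on $n$: assuming $\THH^{[n],R}(H\F_p,H\F_p)$ is a wedge of suspensions of $H\F_p$ with homotopy an $\F_p$-algebra of the above form, the defining tensor $\THH^{[n+1],R}(H\F_p,H\F_p)=\THH^{[n],R}(H\F_p,H\F_p)\otimes_{H\F_p}\mathbb{S}^1$ is computed by the two-sided bar construction $B^{H\F_p}(H\F_p, \THH^{[n],R}(H\F_p,H\F_p), H\F_p)$, whose homotopy by \cite{dlr} is the homology of the algebraic bar construction on $\THH^{[n],R}_*(H\F_p,H\F_p)$, namely $\Tor_{*,*}^{\THH^{[n],R}_*(H\F_p,H\F_p)}(\F_p,\F_p)$. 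One checks that this Tor-algebra is again of the same shape (a tensor product of exterior and divided-power factors on generators in a range of degrees), so the inductive hypothesis is reproduced and the equivalence to a wedge of $H\F_p$'s persists; the base case $n=0$ is Corollary~\ref{cor:rmodxhk}.

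The main obstacle is verifying that the hypotheses of the splitting results of \cite{dlr} (and \cite{BLPRZ}) really apply at each stage---that is, that $\THH^{[n],R}_*(H\F_p,H\F_p)$ stays within the class of $\F_p$-algebras for which ``commutative $H\F_p$-algebra with this homotopy $\Rightarrow$ generalized Eilenberg--Mac\,Lane spectrum'' holds, and that the bar-construction spectral sequence degenerates with no multiplicative extensions at every level. Since $\F_p$ is a field, degree counting rules out differentials (as in the proofs of Theorem~\ref{thm:kulthh} and Proposition~\ref{prop:kukohigherthh}), and the relevant class of $\F_p$-algebras is closed under forming $\Tor_{*,*}(\F_p,\F_p)$, so this is routine rather than deep; it is nonetheless the step that needs to be spelled out carefully, and I would handle it by citing \cite{dlr} for the key realization statement and \cite{BLPRZ} for the identification of the bar homology, exactly as in Corollary~\ref{cor:rmodxhk}.
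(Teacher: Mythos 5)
Your proposal is correct and follows essentially the same route as the paper: starting from Corollary~\ref{cor:rmodxhk}, model $\THH^{[n+1],R}(H\F_p,H\F_p)$ by the two-sided bar construction $B(H\F_p,\THH^{[n],R}(H\F_p,H\F_p),H\F_p)$ and invoke the methods of \cite{dlr} and \cite{BLPRZ} to identify its homotopy with the bar homology $\Tor_{*,*}^{\THH^{[n],R}_*(H\F_p,H\F_p)}(\F_p,\F_p)$, with the spectral sequences collapsing and no extensions since the relevant class of $\F_p$-algebras is preserved under forming Tor. Your induction just makes explicit what the paper leaves implicit.
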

\begin{rem}
In the above statement one can consider a slightly more general case
of any field of characteristic $p$.

\end{rem}

\begin{prop} \label{prop:thh-fieldcoeffs}
Assume in addition to the requirements of Proposition
\ref{prop:thh-regularquotients} that there is a regular sequence $(x,
y_1, \ldots, y_n)$ in $\pi_*R$ such that $R/(x,
y_1, \ldots, y_n)$ is $Hk$ for some field $k$. Then
$$ \THH^{R}_*(R/x, Hk) \cong \Gamma_k(\rho^0 \varepsilon x)$$
with $|\rho^0 \varepsilon x| = |x|+2$.
If $k=\F_p$, then
$$ \THH^{[n+1], R}_*(R/x, H\F_p) \cong \Tor_{*,*}^{\THH^{[n],
    R}_*(R/x, H\F_p)}(\F_p, \F_p)$$
for all $n \geq 0$.
\end{prop}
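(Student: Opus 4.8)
The plan is to imitate the proof of Corollary~\ref{cor:rmodxhk}, with $(R/x)\wedge_R Hk$ playing the role that $Hk\wedge_R Hk$ plays there, and then to bootstrap to higher $\THH$ exactly as in the (unlabelled) corollary following Corollary~\ref{cor:rmodxhk}. For the first statement, begin with the Koszul resolution of $k=\pi_*R/(x,y_1,\dots,y_n)$ over $\pi_*R$: since $(x,y_1,\dots,y_n)$ is a regular sequence it is $\Lambda_{\pi_*R}(e_0,\dots,e_n)$ with $d e_0=x$ and $d e_i=y_i$. Tensoring down along $\pi_*R\to\pi_*(R/x)=\pi_*R/x$ makes $d e_0$ vanish while leaving $y_1,\dots,y_n$ a regular sequence on $\pi_*R/x$, so the $\Tor$ spectral sequence converging to $\pi_*((R/x)\wedge_R Hk)$ has $E^2$-term $\Lambda_k(\varepsilon x)$ with $|\varepsilon x|=|x|+1$, concentrated in homological degrees $0$ and $1$; hence it collapses with no extensions and $\pi_*((R/x)\wedge_R Hk)\cong\Lambda_k(\varepsilon x)$. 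Since $|x|$ is even this is genuinely exterior, so by \cite[Proposition~2.1]{dlr} we get $(R/x)\wedge_R Hk\simeq Hk\vee\Sigma^{|x|+1}Hk$ as augmented commutative $Hk$-algebras with square-zero multiplication. Applying Lemma~\ref{lem:cofibers} to $R\to R/x\to Hk$ and using $\THH^R(R,Hk)\simeq Hk$ yields
$$\THH^R(R/x,Hk)\simeq Hk\wedge^L_{(R/x)\wedge_R Hk}Hk\simeq Hk\wedge^L_{Hk\vee\Sigma^{|x|+1}Hk}Hk = B^{Hk}(Hk,Hk\vee\Sigma^{|x|+1}Hk,Hk).$$
By \cite{dlr} its homotopy is the homology of the algebraic bar construction $B^k(k,\Lambda_k(\varepsilon x),k)$, and by \cite[Proposition~2.3]{BLPRZ} the latter is quasi-isomorphic to $\Gamma_k(\rho^0\varepsilon x)$ with zero differential and $|\rho^0\varepsilon x|=|\varepsilon x|+1=|x|+2$, which is the first claim.

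For the second statement I would first record the recursion
$$\THH^{[m],R}(R/x,H\F_p)\simeq H\F_p\wedge^L_{\THH^{[m-1],R}(R/x,H\F_p)}H\F_p\qquad\text{for }m\geq1.$$
Writing $\mathbb{S}^m$ as the homotopy pushout $\ast\leftarrow\mathbb{S}^{m-1}\to\ast$ and using that $(R/x)\otimes_R(-)$ (the tensor in commutative $R$-algebras) preserves colimits, one gets $(R/x)\otimes_R\mathbb{S}^m\simeq(R/x)\wedge^L_U(R/x)$ with $U:=\THH^{[m-1],R}(R/x)=(R/x)\otimes_R\mathbb{S}^{m-1}$. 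Base-changing one tensor factor down to $H\F_p$ over $R/x$ gives $\THH^{[m],R}(R/x,H\F_p)\simeq(R/x)\wedge^L_UH\F_p$, where $H\F_p$ is a $U$-algebra through the collapse map $U\to R/x$. A change-of-rings argument along $U\to U\wedge_{R/x}H\F_p$ then rewrites this as $H\F_p\wedge^L_{U\wedge_{R/x}H\F_p}H\F_p$: the left copy of $H\F_p$, viewed as a $(U\wedge_{R/x}H\F_p)$-module via its augmentation, is the base change of the $U$-module $R/x$, and after reassociating one is left with $(R/x)\wedge^L_UH\F_p$. Since $U\wedge_{R/x}H\F_p=\THH^{[m-1],R}(R/x,H\F_p)$ this is the claimed recursion; for $m=1$ it recovers $\THH^R(R/x,H\F_p)\simeq H\F_p\wedge^L_{(R/x)\wedge_R H\F_p}H\F_p$ from the first part, because $\THH^{[0],R}(R/x,H\F_p)=(R/x)\wedge_R H\F_p$.

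It then remains to check that each bar spectral sequence $\Tor^{\THH^{[n],R}_*(R/x,H\F_p)}_{*,*}(\F_p,\F_p)\Rightarrow\THH^{[n+1],R}_*(R/x,H\F_p)$ collapses with no extensions. The base cases are $\THH^{[0],R}_*(R/x,H\F_p)\cong\Lambda_{\F_p}(\varepsilon x)$ and, by the first part, $\THH^{[1],R}_*(R/x,H\F_p)\cong\Gamma_{\F_p}(\rho^0\varepsilon x)$, which over $\F_p$ is a finite-type tensor product of truncated polynomial algebras $\F_p[z]/z^p$ on even-degree classes. As in the corollary following Corollary~\ref{cor:rmodxhk}, one invokes \cite[Section~3]{dlr} together with \cite{BLPRZ}: for a connective commutative augmented $H\F_p$-algebra whose homotopy is a finite-type tensor product of exterior algebras on odd generators and divided power (equivalently here, truncated polynomial) algebras on even generators, the bar construction $B^{H\F_p}(H\F_p,-,H\F_p)$ is formal, so its homotopy is exactly the $\Tor$ of the homotopy, and that $\Tor$ is again of this shape because $\Tor^{\Lambda}(\F_p,\F_p)=\Gamma$ and $\Tor^{\F_p[z]/z^p}(\F_p,\F_p)=\Lambda\otimes\Gamma$, compatibly with tensor products. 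Induction on $n$, using the recursion, then gives $\THH^{[n+1],R}_*(R/x,H\F_p)\cong\Tor^{\THH^{[n],R}_*(R/x,H\F_p)}_{*,*}(\F_p,\F_p)$ for all $n\geq0$. The slightly delicate point is the change-of-rings identification underlying the recursion; the genuinely substantial input is the formality/collapse statement, but this is precisely the content of \cite{BLPRZ} and \cite[Section~3]{dlr} already used elsewhere in the paper, so once the first claim is in place the higher statement follows by the same mechanism as for $\THH^{[n],\ell}_*(ku_{(3)},H\F_3)$ and $\THH^{[n],ko}_*(ku,H\F_2)$.
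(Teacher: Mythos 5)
Your proposal is correct and follows essentially the same route as the paper: identify $\pi_*((R/x)\wedge_R Hk)\cong\Lambda_k(\varepsilon x)$, use \cite[2.1]{dlr} to recognize the square-zero extension $Hk\vee\Sigma^{|x|+1}Hk$, apply the Greenlees cofiber sequence (Lemma~\ref{lem:cofibers}) for $R\to R/x\to Hk$ together with \cite[Proposition~2.3]{BLPRZ} to get $\Gamma_k(\rho^0\varepsilon x)$, and then iterate via the two-sided bar construction $B(H\F_p,\THH^{[n],R}(R/x,H\F_p),H\F_p)$ with collapse supplied by the methods of \cite{dlr} and \cite{BLPRZ}. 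The only difference is that you spell out details the paper leaves implicit (the Koszul computation of $\pi_*((R/x)\wedge_R Hk)$ and the change-of-rings derivation of the bar-construction recursion), which is fine.
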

\begin{proof}
We consider the sequence of commutative $S$-algebras
$$ R \ra R \ra R/x \ra Hk.$$
Then $\pi_*(Hk \wedge_R R/x) \cong \Lambda_k(\varepsilon x)$
and as before we can conclude with \cite[2.1]{dlr} that
$Hk \wedge_R R/x$ is equivalent to the square zero extension
$Hk \vee \Sigma^{|\varepsilon x|} Hk$ in the homotopy category of
commutative augmented $Hk$-\algspta.

Greenlees' cofiber sequence identifies $\THH^R(R/x, Hk)$ as
$$ Hk \wedge^L_{Hk \vee \Sigma^{|\varepsilon x|} Hk} Hk$$
and we know from \cite{dlr,BLPRZ} that this gives $\THH^R_*(R/x, Hk)
\cong \Gamma_k(\rho^0 \varepsilon x)$.

Higher $\THH$ can be calculated using the Tor spectral sequence
associated to the  $2$-sided bar construction: A simplicial model for
$\THH^{[n+1], R}(R/x, H\F_p)$ is
$$ B(H\F_p, \THH^{[n], R}(R/x, H\F_p), H\F_p)$$
and we know from the methods established in \cite{dlr} and
\cite{BLPRZ} that these Tor-spectral sequences
all collapse at the $E^2$-term with no non-trivial extensions.
\end{proof}

\begin{exs}
We end the section with some examples.
\begin{enumerate}
\item
Let $R$ be an Eilenberg-MacLane spectrum $HA$ with $A$ a commutative
ring and let
$x$ be regular in $A$. Then $\THH_*^{HA}(HA/x)$ is isomorphic to
Shukla-homology of $A/x$ over $A$,
$SH_*^A(A/x)$. In this case we obtain
$$ \THH_*^{HA}(HA/x) \cong SH_*^{A}(A/x) \cong \Gamma_{A/x}(\rho^0 \varepsilon x)$$
with $|\rho^0 \varepsilon x| = 2$. An explicit generator
of $SH_{2m}^A(A/x)$ is given by
$$ \sum_{i=0}^m (-1)^i \tau^{\otimes i} \otimes 1 \otimes \tau^{\otimes m-i}.$$
Here, we consider the resolution of $A/x$ that is of the form
$(A[\tau]/\tau^2, d(\tau) = x)$.

\noindent
Higher order Shukla homology is crucial for determining higher order
$\THH$ of $\Z/p^m\Z$ with coefficients in $\Z/p\Z$, see \cite{aim}.
\item
Recall that the connective covers of  the Morava $E$-theories, $e_n$,
have coefficients
$$\pi_*(e_n) \cong W\F_{p^n}[[u_1,\ldots, u_{n-1}]][u]$$ with $|u|=2$,
where $W\F_{p^n}$ denotes the Witt vectors
over $\F_{p^n}$ and where the $u_i$ are generators in degree
zero. Hence $\pi_0(e_n) = W\F_{p^n}[[u_1,\ldots, u_{n-1}]]$.
The
quotient $e_n/u = HW\F_{p^n}[[u_1,\ldots, u_{n-1}]]$ is a commutative
$S$-algebra and the map $e_n \ra e_n/u$ can be realized
as a map of commutative $S$-algebras.

The residue field $H\F_{p^n}$ is the quotient
$e_n/(u, u_1,\ldots, u_{n-1}, p)$ and thus the results of Section
\ref{sec:quotients} allow us to calculate
$\THH^{e_n}_*(e_n/u, e_n/u)$ and  $\THH^{e_n, [m]}_*(e_n/u,
H\F_{p^n})$ for all $m \geq 1$.

\item
Lawson and Naumann show in \cite{LN12} that $BP\langle 2\rangle$ at
the prime two is a commutative $S$-algebra by identifying it with the
$2$-localized connective spectrum of topological modular forms
together with a level three structure, $\tmf_1(3)_{(2)}$. They prove
in \cite[section 5]{LN} that there is a map of commutative
$S$-algebras $\varrho\colon \tmf_1(3)_{(2)} \ra
ku_{(2)}$ and there is a complex orientation of $\tmf_1(3)_{(2)}$ such
that the effect of $\varrho$ on homotopy groups is as follows
\cite[section 5]{LN}:
$$ \pi_*(\tmf_1(3)_{(2)}) = \Z_{(2)}[a_1, a_3] \ra \Z_{(2)}[u], \quad
a_1 \mapsto u, \quad a_3 \mapsto 0.$$
Here the degree of $a_i$ is $2i$.

With  Propositions
\ref{prop:thh-regularquotients} and \ref{prop:thh-fieldcoeffs} we can
determine $\THH^{\tmf_1(3)_{(2)}}_*(ku_{(2)})$ additively and
we get explicit formulae for higher relative $\THH$ of
$ku_{(2)} \cong \tmf_1(3)_{(2)}/a_3$ with respect to $\tmf_1(3)_{(2)}$
and with coefficients in $H\F_2 = \tmf_1(3)_{(2)}/(a_3, a_1, 2)$.

\item
The discretization map from $ku$ to $H\Z = ku/u$ gives rise to another
example of a regular quotient with a commutative $S$-algebra
structure with residue field $H\F_p = ku/(u,p)$ for any prime $p$, and
so does the map from the connective Adams summand $\ell$ to $H\Z_{(p)}
= \ell/v_1$ with residue field $H\F_p = \ell/(v_1, p)$.
Thus in these cases we can determine $\THH^{[n], ku}(H\Z, H\Z/p\Z)$,
$\THH^{[n], ku}(H\Z/p\Z, H\Z/p\Z)$ for all primes and
$\THH^{[n], \ell}(H\Z_{(p)}, H\Z/p\Z)$, $\THH^{[n], \ell}(H\Z/p\Z,
H\Z/p\Z)$ for all odd primes
and all $n$.

\end{enumerate}
\end{exs}


\begin{thebibliography}{9999999999}

\bibitem[A08]{angeltveit}
Vigleik Angeltveit, \emph{Topological Hochschild homology and
  cohomology of $A_\infty$ ring spectra},  Geom. Topol. 12 (2008),
no. 2, 987--1032.

\bibitem[Au05]{ausoni}
Christian Ausoni,  \emph{Topological Hochschild homology of
connective complex K-theory}, Amer. J. Math. 127, (2005), 1261--1313.

\bibitem[AuD$\infty$]{torus}
Christian Ausoni, Bj{\o}rn Dundas, \emph{Detecting red-shift via torus
  homology}, in preparation.



\bibitem[BCS10]{BCS}
Andrew Blumberg, Ralph L.~Cohen, Christian Schlichtkrull,
\emph{Topological Hochschild homology of Thom spectra and the free
  loop space}, Geom. Topol. 14 (2010), no. 2, 1165--1242.

\bibitem[BLPRZ15]{BLPRZ}
Irina Bobkova, Ayelet Lindenstrauss, Kate Poirier, Birgit Richter,
Inna Zakharevich
\emph{On the higher topological Hochschild homology of $\mathbb{F}_p$
and commutative $\mathbb{F}_p$-group algebras},
Women in Topology: Collaborations in Homotopy Theory. Contemporary
Mathematics 641, AMS, 2015, 97--122.

\bibitem[BHLPRZ19]{aim}
Irina Bobkova, Eva H\"oning, Ayelet Lindenstrauss, Kate Poirier, Birgit Richter,
Inna Zakharevich, \emph{Splittings and calculational techniques for
  higher $\THH$}, Algebr. Geom. Topol. 19 (2019), no. 7, 3711--3753.

\bibitem[B\"o$\infty_1$]{Boe}
Marcel B\"okstedt,
\emph{Topological Hochschild homology},
preprint.

\bibitem[B\"o$\infty_2$]{boe}
Marcel B\"okstedt,
\emph{The topological Hochschild homology of $\Z$ and of $\Z/p\Z$},
preprint.

\bibitem[BMMS86]{bmms}
Robert R.~Bruner, J.~Peter May, James E.~McClure, Mark Steinberger,
\emph{$H_\infty$ ring spectra and their applications},  Lecture Notes in
Mathematics, 1176. Springer-Verlag, Berlin, 1986. viii+388 pp.

\bibitem[EKMM97]{EKMM}
Anthony D.~Elmendorf, Igor Kriz, Michael A.~Mandell, J.~Peter May, 
\emph{Rings, modules, and algebras in stable homotopy theory}, 
With an appendix by M. Cole. Mathematical Surveys and Monographs,
47. American Mathematical Society, Providence, RI, 1997. {\rm xii}+249
pp. 

\bibitem[FLS94]{fls}
Vincent Franjou, Jean Lannes, Lionel Schwartz,
Autour de la cohomologie de Mac Lane des corps finis,
Invent. Math. 115 (1994), no. 3, 513--538.

\bibitem[DLR18]{dlr}
Bj{\o}rn Ian Dundas, Ayelet Lindenstrauss,  Birgit Richter,
\emph{On higher topological Hochschild homology of rings of integers}, 
Math. Res. Lett. 25 (2018), no. 2, 489--507.

\bibitem[FP98]{FP}
Vincent Franjou, Teimuraz Pirashvili,
\emph{On the Mac Lane cohomology for the ring of integers},
Topology 37 (1998), no. 1, 109--114.

\bibitem[G16]{greenlees}
John P.~C.~Greenlees, \emph{Ausoni-Bokstedt duality for topological
  Hochschild homology},
Journal of Pure and Applied Algebra 220 (2016), no 4, 1382--1402.

\bibitem[Gr92]{greither}
Cornelius Greither, \emph{Cyclic Galois extensions of commutative
  rings}, Lecture Notes in Mathematics, 1534. Springer-Verlag, Berlin,
1992. {\rm x}+145 pp. 


\bibitem[HL10]{hilllawson}
Michael Hill, Tyler Lawson, \emph{Automorphic forms and cohomology
  theories on Shimura curves of small discriminant}, Adv. Math. 225
(2010), no. 2, 1013--1045.


\bibitem[Kl18]{klang}
Inbar Klang, \emph{The factorization theory of Thom spectra and
  twisted nonabelian Poincar\'e duality},  Algebr. Geom. Topol. 18
(2018), no. 5, 2541--2592. 

\bibitem[LL92]{LL}
Michael Larsen, Ayelet Lindenstrauss,
\emph{Cyclic homology of Dedekind domains},
$K$-Theory 6 (1992), no. 4, 301--334.

\bibitem[LM00]{LM} Ayelet Lindenstrauss, Ib Madsen, \emph{Topological
    Hochschild homology of number rings}, Trans. Amer. Math. Soc. 352
  (2000), no. 5, 2179--2204.



\bibitem[LN12]{LN12}
Tyler Lawson, Niko Naumann, \emph{Commutativity conditions for
  truncated Brown-Peterson spectra of height 2}, J. Topol. 5 (2012),
no. 1, 137--168.

\bibitem[LN14]{LN}
Tyler Lawson, Niko Naumann, \emph{Strictly commutative realizations of
  diagrams over the Steenrod algebra and topological modular forms at
  the prime $2$}, Int. Math. Res. Not. IMRN 2014, no. 10, 2773--2813.

\bibitem[Ma16]{mathew}
Akhil Mathew, \emph{The Galois group of a stable homotopy theory},
Adv. Math. 291 (2016), 403--541.

\bibitem[Ro03]{Ro}
Alan Robinson, \emph{Gamma homology, Lie representations and
  $E_\infty$ multiplications}, Invent. Math. 152 (2003), no. 2,
331--348.



\bibitem[R08]{R} John Rognes,
\emph{Galois extensions of structured ring spectra. Stably dualizable groups},
Mem. Amer. Math. Soc. 192 (2008), no. 898, viii+137 pp.

\bibitem[S14]{Sa} Steffen Sagave,
\emph{Logarithmic structures on topological $K$-theory spectra},
Geom. Topol. 18 (2014), no. 1, 447--490.

\bibitem[Sch11]{S}
Christian Schlichtkrull, \emph{Higher topological Hochschild homology
  of Thom spectra}, J. Topol. 4 (2011), no. 1, 161--189.


\end{thebibliography}
\end{document}